\documentclass[a4paper,10pt]{amsart}

\usepackage{blindtext}
\usepackage{quoting}
\usepackage{amsthm}
\usepackage[utf8]{inputenc}
\usepackage{verbatim}
\usepackage{amsmath}
\usepackage{amsfonts}
\usepackage{stackrel}
\usepackage{amssymb}

\usepackage[svgnames]{xcolor}
\definecolor{blue(munsell)}{rgb}{0.0, 0.5, 0.69}
\usepackage{graphicx}
\usepackage{lipsum}
\usepackage{svg}
\usepackage{bbm}
\usepackage{caption}
\usepackage{hyperref}
\usepackage{tcolorbox}
\usepackage{cleveref}
\usepackage{epigraph}
\usepackage{ebproof}
\usepackage[cal=boondoxo]{mathalfa}
\usepackage[all,cmtip]{xy}
\usepackage{mathtools}
\usepackage{color}
\usepackage{mathrsfs}
\usepackage{subfig}
\usepackage{framed}
\usepackage{tikz}
\usepackage{tikz-cd}
\usepackage{multirow}
\usepackage[percent]{overpic}

\setlength\parindent{0pt}

\hypersetup{
    colorlinks = true,
    linkbordercolor = {red},
    linkcolor ={teal},
    anchorcolor = {pink},
    citecolor =  {orange},
    filecolor = {teal},
    menucolor = {teal},
    runcolor =  {teal},
    urlcolor = {teal},
}

\usepackage[all,cmtip]{xy}

\usetikzlibrary{calc}
\usetikzlibrary{matrix,arrows}
\usepackage{tikz-cd}
\usepackage[square, sort, numbers]{natbib}

\usepackage{calligra} 

    {\endMakeFramed}

    {\endMakeFramed}

\makeatletter
\g@addto@macro\bfseries{\boldmath}
\makeatother

\theoremstyle{definition}
\newtheorem{thm}{Theorem}[subsection]
\newtheorem{exa}[thm]{Example}
\newtheorem{rem}[thm]{Remark}
\newtheorem{ach}[thm]{Achtung!}
\newtheorem*{structure*}{Structure}
\newtheorem*{thm*}{Theorem}
\newtheorem*{ach*}{Achtung!}

\newtheorem{con}[thm]{Construction}

\newtheorem{prop}[thm]{Proposition}
\newtheorem{defn}[thm]{Definition}
\newtheorem{cor}[thm]{Corollary}
\newtheorem{notation}[thm]{Notation}

\newcommand\Sat{\operatorname{Sat}}

\newcommand\lan{\mathsf{lan}}

\newcommand\Set{\operatorname{\bf Set}}

\newcommand\ca{\mathcal {A}}
\newcommand\cb{\mathcal {B}}
\newcommand\cc{\mathcal {C}}

\newcommand\cf{\mathcal {F}}

\newcommand\cg{\mathcal {G}}

\newcommand\ce{\mathcal {E}}

\newcommand\ck{\mathcal {K}}

\newcommand\cs{\mathcal {S}}
\newcommand\ct{\mathcal {T}}

\newcommand\cx{\mathcal {X}}
\newcommand\cy{\mathcal {Y}}

\DeclareFontFamily{U}{min}{}
\DeclareFontShape{U}{min}{m}{n}{<-> udmj30}{}

\title{Formal Model theory \& Higher Topology}
\author{Ivan Di Liberti$^\dag$}

\thanks{$^\dag$ This research was mostly developed during the PhD studies of the author and has been supported through the grant 19-00902S from the Grant Agency of the Czech Republic. The finalization of this research has been supported by the GACR project EXPRO 20-31529X and RVO: 67985840.}

\address{
\newline Ivan \textsc{Di Liberti}\newline
Institute of Mathematics\newline
Czech Academy of Sciences\newline
\v{Z}itn\'{a} 25, Prague, Czech Republic\newline
diliberti.math@gmail.com\newline
}


\begin{document}

\begin{abstract}
We study the $2$-categories BIon, of (generalized) bounded ionads, and $\text{Acc}_\omega$, of accessible categories with directed colimits, as an abstract framework to approach formal model theory. We relate them to topoi and (lex) geometric sketches, which serve as categorical specifications of geometric theories. We provide reconstruction and completeness-like results. We relate abstract elementary classes to locally decidable topoi. We introduce the notion of categories of saturated objects and relate it to atomic topoi.
\end{abstract}
\maketitle
\setcounter{tocdepth}{1}

{
  \hypersetup{linkcolor=black}
  \tableofcontents
}

\section*{Introduction}
\subsection*{General setting}
In the late 1980's Makkai and Paré presented their book \textit{Accessible categories: the foundations of categorical model theory} \cite{Makkaipare}, providing a solid framework that could accommodate a large portion of categorical logic. In the fashion of abstract logic and abstract model theory, the book has two main aspects: one semantical and one syntactic. On the one hand they introduced the theory of accessible categories\footnote{Which had already appeared under a different name in the work of Lair and Rosický.}, these are abstract categories of models of some theory. On the other hand they present the theory of \textit{sketches}\footnote{Which had been developed by the French school.}, which provide a categorical specification of infinitary first order theories. The interplay between sketches (syntax) and accessible categories (semantics) is a large portion of categorical model theory.

 Since then, categorical model theory has evolved significantly, thanks to the contribution of several authors, including the authors of the above mentioned book. The study of accessible categories from the point of view of the model theorist has led to the individuation of special classes of accessible categories, that best suit the most natural constructions of model theory. Among the most common additional requirements, we find:

\begin{itemize}
  \item the existence of directed colimits;
  \item amalgamation property (AP);
  \item joint embedding property (JEP);
  \item every morphism is a monomorphism;
  \item the existence of a (very) well behaved faithful functor $\ca \to \Set$ preserving directed colimits.
  \end{itemize}

Each of these different assumptions is motivated by some model theoretic intuition. For example, the request that every morphism is a monomorphism is motivated by the focus on elementary embedding, rather than homomorphisms of structures. The faithful functor into $\Set$ allows to construct directed colimits of models as colimits of underlying structures. The combination of (AP), (JEP) and the existence of directed colimits allows the construction of saturated objects \cite{Rsaturated}.

 Synthetizing the conjoint work of Beke, Rosický, Lieberman, Vasey et al. (see for example \cite{aec,internalsize,lieberman2015limits,internalsize,LB2014,universal,vasey2019accessible}) in a sentence, accessible categories with directed colimits generalize Shelah's framework of abstract elementary classes, and are special enough to recover the main features of categorical model theory. 

\subsection*{Our contribution}
In this paper we shift the focus from categorical model theory to \textit{formal model theory}. By this we intend that instead of studying the property of a category of models via its objects and arrows, we study it via its relational behavior among categories of models of theories. We introduce two specific incarnations of formal model theory.

\begin{itemize}
    \item[$\star$] the $2$-category $\text{Acc}_\omega$, of \textbf{accessible categories with directed colimits}, where $1$-cells are functors preserving directed colimits and $2$-cells are natural transformations.  
\end{itemize}

The study of this $2$-category is very coherent with the classical tradition à la Makkai-Paré, and the additional assumptions that we have listed above, will re-emerge in this setting, depending on the kind of constructions and behavior typical of model theory that we want to simulate.

\begin{itemize}
   \item[$\star$] the $2$-category $\text{BIon}$, of (generalized) bounded ionads.
\end{itemize}

The first notion of ionads was introduced by Garner \cite{ionads}, mainly from a topological point of view. In this paper we introduce the notion of \textbf{ionad of models} of a geometric theory, and we give a ionadic interpretation of Makkai's Ultracategories. 

On the syntactic side of this paper we find \textbf{topoi} and (lex) \textbf{geometric sketches}, these are both categorical specifications of geometric theories, as we discuss in the background section.
 \begin{center}
\begin{tikzcd}
                                                          & \mathsf{LGSketches} \arrow[rrdddddd, "\gimel" description, bend left=20] \arrow[dddd, "\text{Mod}" description,] \arrow[ldddddd, "\mathbb{M}\mathbbm{od}" description, bend right=15] &  &                                                                                                       \\
                                                          &                                                                                                                                                                                  &  &                                                                                                       \\
                                                          &                                                                                                                                                                                  &  &                                                                                                       \\
                                                          &                                                                                                                                                                                  &  &                                                                                                       \\
                                                          & \text{Acc}_\omega \arrow[ldd, "\mathsf{ST}" description, dashed, bend right=20] \arrow[rrdd, "\mathsf{S}" description, dashed, bend left=20]                                                             &  &                                                                                                       \\
                                                          &                                                                                                                                                                                  &  &                                                                                                       \\
\text{BIon} \arrow[rrr, "\mathbb{O}" description, dashed, bend right=20] &                                                                                                                                                                                  &  & \text{Topoi} \arrow[lluu, "\mathsf{pt}" description,] \arrow[lll, "\mathbbm{pt}" description,]
\end{tikzcd}
\end{center}

In this paper we study the interplay between theories (topoi and sketches) and categories of models (ionads and accessible categories), providing reconstruction results for both of them. This will amount to a complete description of the diagram above.

From a technical point of view we build on two previous paper of ours \cite{thcat,thgeo}, where we develop relationships between topoi, bounded ionads, and accessible categories with directed colimits.

\begin{center}
\begin{tikzcd}
                                                                      & \text{Loc} \arrow[lddd, "\mathbbm{pt}" description, bend left=12] \arrow[rddd, "\mathsf{pt}" description, bend left=12] &                                                                                                                 \\
                                                                      &                                                                                                                  &                                                                                                                 \\
                                                                      &                                                                                                                  &                                                                                                                 \\
\text{Top} \arrow[ruuu, "\mathcal{O}" description, dashed, bend left=12] &                                                                                                                  & \text{Pos}_{\omega} \arrow[luuu, "\mathsf{S}" description, dashed, bend left=12] \arrow[ll, "\mathsf{ST}", dashed]
\end{tikzcd}
 \qquad
\begin{tikzcd}
                                                                      & \text{Topoi} \arrow[lddd, "\mathbbm{pt}" description, bend left=12] \arrow[rddd, "\mathsf{pt}" description, bend left=12] &                                                                                                                 \\
                                                                      &                                                                                                                  &                                                                                                                 \\
                                                                      &                                                                                                                  &                                                                                                                 \\
\text{BIon} \arrow[ruuu, "\mathbb{O}" description, dashed, bend left=12] &                                                                                                                  & \text{Acc}_{\omega} \arrow[luuu, "\mathsf{S}" description, dashed, bend left=12] \arrow[ll, "\mathsf{ST}", dashed]
\end{tikzcd}
\end{center}

In \cite{thgeo} we have categorified the Scott topology on a poset with directed joins and the Isbell duality between locales and topological spaces. We will briefly recall the results on those papers in the first sections, contextualizing them in the framework of Lawvere functorial semantics.

\subsection*{Structure} 

The exposition is organized as follows:
\begin{enumerate}
  \item[\S \ref{back}] We put together the needed background on accessible categories, ionads, topoi and sketches. The section can be completely skipped by the reader that is well versed with this topic and is intended to be a soft interaction for those readers whose background is closer to classical model theory. We provide several references and we contextualize most of the definition.
  \item[\S \ref{logicgeneralizedaxiom}] We recall the most relevant results of \cite{thgeo}, the \textbf{Scott adjunction} and the \textbf{categorified Isbell duality}, putting them in the context of functorial semantics.  We traces back the Scott topos to the seminal works of Linton and Lawvere on algebraic theories and algebraic varieties.
  \item[\S \ref{logicclassifyingtopoi}] This section is dedicated to reconstruction results. First we answer the question \textit{is there any relation between Scott topoi} and \textit{classifying topoi}? with a partially affirmative answer. Indeed every theory $\cs$ has a category of models $\mathsf{Mod}(\cs)$, but this category does not retain enough information to recover the theory, even when the theory has enough points (\Cref{classificatore}). That's why the Scott adjunction is not sharp enough. Nevertheless, every theory has a ionad of models $\mathbb{M}\mathbbm{od}(\cs)$, the category of opens of such a ionad $\mathbb{O}\mathbb{M}\mathbbm{od}(\cs)$ recovers theories with enough points (\Cref{isbellclassificatore}).
  \item[\S \ref{logicaec}] This section describes the relation between the Scott adjunction and abstract elementary classes, providing a restriction of the Scott adjunction to one between accessible categories where every map is a monomorphism and locally decidable topoi (\Cref{LDCAECs}).
  \item[\S \ref{logicsaturatedobjects}] In this section we give the definition of \textit{category of saturated objects} (CSO) and show that the Scott adjunction restricts to an adjunction between CSO and atomic topoi (\Cref{thmcategoriesofsaturated objects}). This section can be understood as an attempt to conceptualize the main result in \cite{simon}.
  \end{enumerate}

\subsection*{Notations and conventions} \label{backgroundnotations}

Most of the notation will be introduced when needed and we will try to make it as natural and intuitive as possible, but we would like to settle some notation.
\begin{enumerate}
\item $\ca, \cb$ will always be accessible categories, very possibly with directed colimits.
\item $\cx, \cy$ will always be ionads.
\item $\mathsf{Ind}_\lambda$ is the free completion under $\lambda$-directed colimits.
\item $\ca_{\kappa}$ is the full subcategory of $\kappa$-presentable objects of $\ca$.
\item $\cg, \ct, \cf, \ce$ will be Grothendieck topoi.
\item In general, $C$ is used to indicate small categories.
\item $\eta$ is the unit of the Scott adjunction.
\item $\epsilon$ is the counit of the Scott adjunction.
\item  $\P(X)$ is the category of small copresheaves of $X$.
\item An Isbell topos is a topos of the form $\mathbb{O}(\cx)$, for some bounded ionad $\cx$;
\item A Scott topos is a topos of the form $\mathsf{S}(\ca)$ for some accessible category $\ca$ with directed colimits.
\end{enumerate}

\begin{notation}[Presentation of a topos]\label{presentation of topos}
A presentation of a topos $\cg$ is the data of a  geometric embedding into a presheaf topos $f^*: \Set^{C} \leftrightarrows \cg :  f_*$. This means precisely that there is a suitable topology $\tau_f$ on $C$ that turns $\cg$ into the category of sheaves over $\tau$; in this sense $f$ \textit{presents} the topos as the category of sheaves over the site $(C, \tau_f)$. 
\end{notation}

\section{Background: accessibility, sketches, topoi and ionads} \label{back}
This section is merely expository and can be skipped by the reader that is well versed with the topics that it introduces.

\subsection{Accessible categories} \label{backgroundLPAC}

The theory of accessible and locally presentable categories has gained quite some popularity along the years because of its natural ubiquity. Most of the categories of the \textit{working mathematician} are accessible, with a few (but still extremely important) exceptions. For example, the category $\mathsf{Top}$ of topological spaces is not accessible. In general, categories of algebraic structures are locally  $\aleph_0$-presentable and many relevant categories of geometric nature are $\aleph_1$-accessible. A sound rule of thumb is that locally finitely presentable categories correspond to categories of models essentially algebraic theories, in fact this is even a theorem in a proper sense \cite[Chap. 3]{adamekrosicky94}. A similar intuition is available for accessible categories too, but some technical price must be paid \cite[Chap. 5]{adamekrosicky94}. Accessible and locally presentable categories (especially the latter) are \textit{tame} enough to make many categorical wishes come true; that's the case for example of the adjoint functor theorem, that has a very easy to check version for locally presentable categories.

\begin{ach} In this section $\lambda$ is a regular cardinal.
\end{ach}

\begin{defn}[$\lambda$-accessible category]
A $\lambda$-accessible category $\ca$ is a category with $\lambda$-directed colimits with a set of $\lambda$-presentable objects that generate by $\lambda$-directed colimits. An accessible category is a category that is $\lambda$-accessible for some $\lambda$.
\end{defn}

\begin{defn}[Locally $\lambda$-presentable category]
A locally $\lambda$-presentable category is a cocomplete $\lambda$-accessible category. A locally presentable category is a category that is locally $\lambda$-presentable for some $\lambda$.
\end{defn}

\begin{defn}[$\lambda$-presentable object]
An object $a \in \ca$ is $\lambda$-presentable if its covariant hom-functor $\ca(a, -): \ca \to \Set$ preserves $\lambda$-directed colimits.
\end{defn}

\begin{defn}[$\lambda$-directed posets and $\lambda$-directed colimits]
A poset $P$ is $\lambda$-directed if it is non empty and for every $\lambda$-small\footnote{This means that its cardinality is strictly less then $\lambda$. For example $\aleph_0$-small means finite.} family of elements $\{p_i\} \subset P$, there exists an upper bound. A $\lambda$-directed colimit is the colimit of a diagram over a $\lambda$-directed poset (seen as a category).
\end{defn}

\begin{notation} For a category $\ca$, we will call $\ca_\lambda$ its full subcategory of $\lambda$-presentable objects.
\end{notation}

\subsubsection{Literature}

There are two main references for the theory of accessible and locally presentable categories, namely \cite{adamekrosicky94} and \cite{Makkaipare}. The first one is intended for a broader audience and appeared few years after the second one. The second one is mainly concerned with the logical aspects of this theory. Another good general exposition is  \cite[Chap. 5]{BOR2}.

\subsubsection{Accessible categories and (infinitary) logic}
Accessible categories have been connected to (infinitary) logic in several (partially independent) ways. This story is recounted in Chapter 5 of \cite{adamekrosicky94}. Let us recall two of the most important results of that chapter.
\begin{enumerate}
  \item As locally presentable categories, accessible categories are categories of models of theories, namely \textit{basic} theories \cite[Def. 5.31, Thm. 5.35]{adamekrosicky94}.
  \item Given a theory $T$ in $L_\lambda$ the category $\mathsf{Elem}_\lambda(T)$ of models and $\lambda$-elementary embeddings is accessible \cite[Thm. 5.42]{adamekrosicky94}.
\end{enumerate} 
Unfortunately, it is not true in general that the whole category of models and homomorphisms of a theory in $L_\lambda$ is accessible. It was later shown by Lieberman \cite{Lthesis} and independently by Rosický and Beke \cite{aec}  that abstract elementary classes are accessible too. The reader that is interested in this connection might find interesting \cite{vasey2019accessible}, whose language is probably the closest to that of a model theorist.

\subsection{Sketches} \label{backgroundsketches}

\begin{defn}[Sketch]
A sketch is a quadruple $\mathcal{S}=(S, L,C, \sigma)$ where
\begin{enumerate}
  \item[$S$] is a small category;
  \item[$L$] is a class of diagrams in $S$, called \textit{limit} diagrams;
  \item[$C$] is a class of diagrams in $S$, called \textit{colimit} diagrams;
  \item[$\sigma$] is a function assigning to each diagram in $L$ a cone and to each diagram in $C$ a cocone.
\end{enumerate}
\end{defn}

\begin{defn}
A sketch is
\begin{itemize}

  \item \textit{limit} if $C$ is empty;
  \item \textit{colimit} if $L$ is empty;
  \item \textit{mixed} (used only in  emphatic sense) if it's not limit, nor colimit;
  \item \textit{geometric}  if each cone is finite;
  \item \textit{coherent} if it is geometric and and every cocone is either finite or discrete, or it is a regular-epi specification\footnote{See \cite[D2.1.2]{elephant2}.}.
  
\end{itemize}
\end{defn}

\begin{defn}[Morphism of Sketches]
Given two sketches $\cs$ and $\ct$, a morphism of sketches $f: \cs \to \ct$ is a functor $f: S \to T$ mapping (co)limit diagrams into (co)limits diagrams and proper (co)cones into (co)cones.
\end{defn}

\begin{defn}[$2$-category of Sketches]
The $2$-category of sketches has sketches as objects, morphism of sketches as $1$-cells and natural transformations as $2$-cells.
\end{defn}

\begin{defn}[Category of models of a sketch]
For a sketch $\cs$ and a (bicomplete) category $\cc$, the category $\mathsf{Mod}_{\cc}(\cs)$ of $\cc$-models of the sketch is the full subcategory of $\cc^\cs$ of those functors that are models. If it's not specified, by $\mathsf{Mod}(\cs)$ we mean $\mathsf{Mod}_{\Set}(\cs)$.
\end{defn}

\begin{defn}[Model of a sketch]
A model of a sketch $\cs$ in a category $\cc$ is a functor $f: \cs \to \cc$ mapping each specified (co)cone to a (co)limit (co)cone.  If it's not specified a model is a $\Set$-model.
\end{defn}

\subsubsection{Literature}
There exists a plethora of different and yet completely equivalent approaches to the theory of sketches. We stick to the one that suits best our setting, following mainly \cite[Chap. 5.6]{BOR2} or \cite[Chap. 2.F]{adamekrosicky94}. Other authors, such as \cite{Makkaipare} and \cite{elephant2} use a different (and more classical) definition involving graphs. Sketches are normally used as generalized notion of theory. From this perspective these approaches are completely equivalent, because the underlying categories of models are the same. \cite[page 40]{Makkaipare} stresses that the graph-definition is a bit more flexible in \textit{daily practice}. Sketches were introduced by C. Ehresmann. Guitart, Lair and Burroni should definitely be mentioned among the relevant contributors. This list of references does not do justice to the French school, which has been extremely prolific on this topic, yet, for the purpose of this paper the literature above will be more then sufficient.

\subsubsection{Sketches: logic and sketchable categories}
Sketches became quite common among category theorists because of their expressiveness. In fact, they can be used as a categorical analog of those theories that can be axiomatized by (co)limit properties. For example, in the previous section, essentially algebraic theories are precisely those axiomatizable by finite limits.

\subsubsection{From theories to sketches}
We have mentioned that a sketch can be seen as a kind of theory. This is much more than a motto, or a motivational presentation of sketches. In fact, given a (infinitary) first order theory $\mathbb{T}$, one can always construct in a more or less canonical way a sketch $\mathcal S_{\mathbb{T}}$ whose models are precisely the models of $\mathbb{T}$. This is very well explained in \cite[D2.2]{elephant2}; for the sake of exemplification, let us state the theorem which is most relevant to our context.

\begin{thm}
If $\mathbb{T}$ is a (geometric) (coherent) theory, there there exists a (geometric) (coherent) sketch having the same category of models of $\mathbb{T}$.
\end{thm}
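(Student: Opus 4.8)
The plan is to realize the theory as a \emph{syntactic category} and then read off a sketch structure from its logical limits and colimits. Concretely, starting from a (geometric) (coherent) theory $\mathbb{T}$ over a signature $\Sigma$, I would first form the syntactic category $\mathcal{C}_{\mathbb{T}}$ whose objects are formulas-in-context $[\vec{x}.\,\phi]$ taken up to renaming of variables, and whose morphisms $[\vec{x}.\,\phi]\to[\vec{y}.\,\psi]$ are the $\mathbb{T}$-provably-functional formulas between them. Under the standing smallness assumptions (small signature, set-indexed disjunctions) the category $\mathcal{C}_{\mathbb{T}}$ is essentially small, and it carries the structure of a coherent (resp. geometric) category: finite limits, stable image factorizations, and the relevant unions of subobjects. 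The fundamental theorem of categorical logic then identifies $\Set$-models of $\mathbb{T}$ with coherent (resp. geometric) functors $\mathcal{C}_{\mathbb{T}}\to\Set$, naturally in homomorphisms; see \cite[D1.4, D2.2]{elephant2}. It therefore suffices to exhibit a sketch $\cs_{\mathbb{T}}=(S,L,C,\sigma)$ whose $\Set$-models are exactly these functors.

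Taking $S=\mathcal{C}_{\mathbb{T}}$ as the base category, I would let $L$ consist of the finite limit diagrams that actually are limits in $\mathcal{C}_{\mathbb{T}}$ — the terminal object (encoding $\top$), products of sorts, equalizers (encoding equality), and pullbacks (encoding conjunction and the witnesses needed for functionality) — with $\sigma$ assigning to each its universal cone. Since every such cone is finite, the resulting sketch is automatically \emph{geometric} in the sense of the paper, where only the cones are required to be finite. A functor $M\colon S\to\Set$ sends the diagrams in $L$ to limits precisely when $M$ preserves finite limits, i.e. when $M$ interprets the finite-limit fragment of the syntax correctly.

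For the colimit specifications $C$ I would record the structure responsible for disjunction and existential quantification: the initial object encodes $\bot$; finite unions of subobjects encode finite disjunctions and appear as finite colimit cocones; and the image factorizations (regular-epi specifications in the sense of \cite[D2.1.2]{elephant2}) encode existential quantification. This matches exactly the clause in the definition of a \emph{coherent} sketch that allows cocones to be finite, discrete, or regular-epi specifications. In the purely geometric case one must additionally record arbitrary set-indexed unions, which produce colimit cocones that need not be finite; this is harmless, since the definition of geometric sketch constrains only the cones. A functor $M\colon S\to\Set$ sending the cones in $L$ and the cocones in $C$ to (co)limits is then precisely a coherent (resp. geometric) functor, hence a $\Set$-model of $\mathbb{T}$, and conversely. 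As morphisms of sketch models and homomorphisms of $\mathbb{T}$-models are both natural transformations between these functors, the two categories of models coincide.

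The main obstacle I anticipate is \textbf{smallness in the geometric case}: with unrestricted disjunctions the collection of formulas-in-context, and hence $\mathcal{C}_{\mathbb{T}}$, can fail to be small, whereas a sketch requires $S$ to be a small category. I would address this by working under the usual set-theoretic bound on disjunctions (equivalently, within the $\kappa$-geometric fragment for a suitable $\kappa$), so that $\mathcal{C}_{\mathbb{T}}$ is essentially small while still carrying the generic model; the coherent case is unproblematic since its syntax is finitary. The remaining verification — that preservation of exactly the specified cones and cocones is equivalent to coherence (resp. geometricity) of the functor, and that this is in turn equivalent to satisfying every sequent of $\mathbb{T}$ — is the content of \cite[D2.2]{elephant2}, which I would cite rather than reprove the soundness and completeness bookkeeping in full.
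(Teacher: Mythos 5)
The paper states this theorem as pure background and gives no proof at all, simply deferring to \cite[D2.2]{elephant2}; your argument --- form the syntactic category $\mathcal{C}_{\mathbb{T}}$, specify its finite-limit cones, and add cocones for $\bot$, unions, and regular-epi (image) specifications, then invoke the equivalence between models of $\mathbb{T}$ and coherent (resp.\ geometric) functors $\mathcal{C}_{\mathbb{T}} \to \Set$ --- is precisely the construction carried out in that reference, so your proposal is correct and follows essentially the same route the paper endorses. One gloss to tighten: a finite union of subobjects is \emph{not} the colimit of the discrete diagram of those subobjects (that would force disjointness), but of the finite diagram that also includes their pairwise intersections (in the binary case, the pushout over the intersection), which is exactly why the paper's definition of coherent sketch permits finite, and not merely discrete, cocones.
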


Some readers might be unfamiliar with geometric and coherent theories; these are just very specific fragments of first order (infinitary) logic. For a very detailed and clean treatment we suggest \cite[D1.1]{elephant2}. Sketches are quite a handy notion of theory because we can use morphisms of sketches as a notion of translation between theories. 

\begin{prop}[{\cite[Ex. 5.7.14]{BOR2}}]
If $f: \mathcal{S} \to\mathcal{T} $ is a morphism of sketches, then composition with $f$ yields an (accessible) functor $\mathsf{Mod}(\cs) \to \mathsf{Mod}(\ct)$.
\end{prop}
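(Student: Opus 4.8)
The plan is to exhibit the functor as \emph{precomposition (restriction) along $f$} and then to check the two assertions it bundles together: that this operation carries models to models, and that the resulting functor is accessible. Concretely, a model is a functor into $\Set$ sending the specified diagrams to (co)limit (co)cones, and the only canonical operation furnished by $f\colon S\to T$ is $M\mapsto M\op f$. This is manifestly functorial in $M$ (it acts on a natural transformation of models by whiskering with $f$), so it already defines a functor on presheaf categories $\Set^{T}\to\Set^{S}$; the entire content is to show that it restricts to the subcategories of models and that it does so accessibly.

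For the first point I would argue diagram by diagram. Let $D$ be a distinguished limit diagram in $S$ with specified cone $\sigma(D)$. Because $f$ is a morphism of sketches, it carries $D$ together with $\sigma(D)$ to a distinguished limit diagram of $T$ equipped with its specified cone; since a model $M\colon T\to\Set$ sends that specified cone to a genuine limit cone, the composite $M\op f\colon S\to\Set$ sends $\sigma(D)$ to the same limit cone. The argument for the distinguished colimit diagrams and their cocones is identical, with colimits replacing limits. Hence $M\op f$ satisfies every defining condition of a model, so composition with $f$ indeed lands in $\mathsf{Mod}$.

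For accessibility I would first invoke the background fact that the category of models of any sketch is accessible, so both the domain and the codomain are accessible categories. Then I would fix a regular cardinal $\lambda$ large enough that \emph{every} distinguished diagram occurring in $S$ or in $T$ is $\lambda$-small. For such a $\lambda$ the cocone conditions are automatically preserved by all colimits (colimits commute with colimits), while the cone conditions are preserved by $\lambda$-directed colimits, because in $\Set$ the $\lambda$-directed colimits commute with $\lambda$-small limits. Consequently each subcategory of models is closed under $\lambda$-directed colimits in its presheaf category, and these are computed pointwise. Since restriction along $f$ preserves pointwise colimits, it preserves $\lambda$-directed colimits between the two model categories, and a functor between accessible categories preserving $\lambda$-directed colimits for some regular $\lambda$ is by definition accessible.

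The model-preservation step is essentially a restatement of the definition of a morphism of sketches and presents no real difficulty; the step I expect to be the main obstacle is the accessibility claim. There the delicate points are choosing a single cardinal $\lambda$ that simultaneously bounds all distinguished diagrams of \emph{both} sketches, and justifying—through the commutation of $\lambda$-directed colimits with $\lambda$-small limits in $\Set$—that models are closed under $\lambda$-directed colimits computed pointwise. This closure is exactly what allows the pointwise-colimit-preserving restriction functor to inherit accessibility, and it is also where one must lean on the (background) theorem that sketchable categories are accessible.
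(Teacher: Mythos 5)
Your argument is correct, and since the paper offers no proof of this proposition at all---it is quoted verbatim from \cite[Ex. 5.7.14]{BOR2}---there is nothing internal to compare it against; what you wrote is the standard argument the citation points to. The two halves are exactly right: precomposition restricts to models because a morphism of sketches carries distinguished diagrams and specified (co)cones to distinguished diagrams and specified (co)cones; and accessibility follows from closure of the model categories under $\lambda$-directed colimits computed pointwise (cocones by commutation of colimits with colimits, cones by commutation of $\lambda$-small limits with $\lambda$-directed colimits in $\Set$, for $\lambda$ bounding the size of all distinguished diagrams of both sketches), together with the fact that precomposition preserves pointwise colimits.

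Two points deserve attention. First, variance: composition with $f\colon \cs \to \ct$ can only give a functor $\Set^{\ct}\to\Set^{\cs}$, hence $\mathsf{Mod}(\ct)\to\mathsf{Mod}(\cs)$. Your proof (correctly) establishes this contravariant version, whereas the statement as printed asserts a functor $\mathsf{Mod}(\cs)\to\mathsf{Mod}(\ct)$; the printed direction cannot be produced by ``composition with $f$'', so it is a typo in the statement, and you should flag the correction explicitly rather than silently prove something formally different from what is asserted. Second, a minor imprecision: ``both categories are accessible and the functor preserves $\lambda$-directed colimits'' is not literally the \emph{definition} of an accessible functor, which asks for a single regular cardinal $\kappa$ such that both categories are $\kappa$-accessible \emph{and} the functor preserves $\kappa$-directed colimits. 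The gap is closed by the standard index-raising theorem: each accessible category is $\kappa$-accessible for arbitrarily large $\kappa$, so one may choose a common $\kappa \geq \lambda$, and any $\kappa$-directed colimit is in particular $\lambda$-directed, hence still preserved. This is routine, but it is the one place where your ``by definition'' hides an actual theorem.
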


\subsubsection{Sketchability}
It should not be surprising that sketches can be used to \textit{axiomatize} accessible and locally presentable categories too. The two following results appear, for example, in \cite[2.F]{adamekrosicky94}.


\begin{thm}
A category is accessible if and only if it's equivalent to the category of models of a mixed sketch.
\end{thm}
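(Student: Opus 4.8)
The statement asserts a characterization of accessible categories: a category is accessible if and only if it is equivalent to $\mathsf{Mod}(\cs)$ for some mixed sketch $\cs$. The proof splits into the two implications, and the real work is in the forward direction (every accessible category is sketchable); the converse (models of a sketch form an accessible category) is the more routine half.

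\textbf{Sufficiency (sketchable $\Rightarrow$ accessible).}
First I would show that for any sketch $\cs = (S, L, C, \sigma)$, the category $\mathsf{Mod}(\cs)$ is accessible. The plan is to realize $\mathsf{Mod}(\cs)$ as an accessibly embedded, accessible subcategory of the presheaf category $\Set^S$. Since $S$ is small, $\Set^S$ is locally $\aleph_0$-presentable. A functor $f \colon S \to \Set$ is a model precisely when, for each specified cone in $L$, the image is a limit cone, and for each specified cocone in $C$, the image is a colimit cocone. Each such condition is an \emph{injectivity-like} (or orthogonality) condition: preserving a given (co)cone can be expressed by requiring that a canonical comparison map induced by $f$ be an isomorphism. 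The key technical input is that the full subcategory of objects satisfying a set of such isomorphism-conditions, each involving only $\lambda$-small data for a suitable $\lambda$, is closed under $\lambda$-directed colimits in $\Set^S$ and is itself accessible. Concretely, one chooses a regular cardinal $\lambda$ larger than the size of $S$ and than the arities of all cones and cocones in $L$ and $C$; then the comparison maps are built from $\lambda$-presentable data, $\lambda$-directed colimits in $\Set^S$ compute pointwise and commute with the relevant finite-or-bounded limits appearing in the cones, so the model conditions are preserved under $\lambda$-directed colimits. I would then invoke the general fact (as in \cite[2.F]{adamekrosicky94}) that a subcategory of a locally presentable category cut out by such bounded orthogonality/isomorphism conditions is accessible.

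\textbf{Necessity (accessible $\Rightarrow$ sketchable).}
For the converse, let $\ca$ be $\lambda$-accessible. The standard strategy is to use the canonical representation $\ca \simeq \mathsf{Ind}_\lambda(\ca_\lambda)$, where $\ca_\lambda$ is the (essentially small) full subcategory of $\lambda$-presentable objects, and $\mathsf{Ind}_\lambda$ denotes the free completion under $\lambda$-directed colimits. The plan is to build a sketch on a small category equivalent to a suitable functor category over $\ca_\lambda$. I would take $S$ to be (a skeleton of) $\ca_\lambda^{\mathrm{op}}$, so that $\Set^S$ is the presheaf category in which $\ca$ embeds via the restricted-Yoneda / canonical functor $\ca \to \Set^{\ca_\lambda^{\mathrm{op}}}$, $a \mapsto \ca(-, a)$. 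The image of this embedding consists exactly of the functors that are $\lambda$-directed colimits of representables, equivalently the \emph{$\lambda$-flat} functors. The crucial point is to encode $\lambda$-flatness by sketch data: flatness is a condition saying certain finite (or $\lambda$-small) limits in $\ca_\lambda$ are sent to the appropriate colimits of representables, so it is captured by a mixed sketch structure $(L, C, \sigma)$ on $S$ — the limit cones $L$ record the $\lambda$-small diagrams of $\ca_\lambda$ whose cofiltered behavior must be respected, and the colimit cocones $C$ encode the presentation of each object as a $\lambda$-directed colimit. One then checks $\mathsf{Mod}(\cs) \simeq \ca$.

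\textbf{The main obstacle.}
The hard part will be the necessity direction, specifically the precise sketch-theoretic encoding of $\lambda$-flatness. Translating "the left Kan extension of $f$ preserves finite (or $\lambda$-small) limits," which is the intrinsic characterization of an $\mathsf{Ind}_\lambda$-object, into a finite list of specified (co)cones $\sigma$ on $S$ is delicate: one must ensure the sketch conditions are \emph{exactly} equivalent to flatness, neither too weak (admitting non-flat functors) nor too strong (excluding genuine objects of $\ca$). The cleanest route is probably to bypass a hand-built sketch and instead appeal to the equivalence $\ca \simeq \mathsf{Ind}_\lambda(\ca_\lambda)$ together with the known sketchability of $\mathsf{Ind}_\lambda$-completions: the category of flat functors on a small category with specified $\lambda$-small limits is the category of models of the canonical sketch whose limit cones are those $\lambda$-small limit cones of $S$ and whose colimit cocones specify the filtered structure. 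Verifying that this sketch's models are precisely the $\lambda$-flat functors, and hence recover $\ca$, is the technical crux; everything else is bookkeeping about cardinal bounds.
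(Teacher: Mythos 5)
The paper itself gives no proof of this statement: it is quoted as background and attributed to \cite[2.F]{adamekrosicky94}, so your attempt has to be measured against the standard argument found there (Lair's theorem, together with the sketchability of accessible categories; cf.\ also \cite{Makkaipare}). Your overall skeleton --- split into the two implications, represent a $\lambda$-accessible $\ca$ via $\lambda$-flat functors on $\ca_\lambda^{\mathrm{op}}$, cut $\mathsf{Mod}(\cs)$ out of $\Set^S$ --- is the standard one, but there is a genuine gap, and it sits exactly where you declare the work ``routine''. That $\mathsf{Mod}(\cs)$ is accessible for every \emph{mixed} sketch is the hard half of the theorem, and the mechanism you propose for it fails. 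A specified limit cone is indeed an orthogonality condition: $\lim_d F(Pd)\cong\mathrm{Nat}(\colim_d S(Pd,-),F)$, so invertibility of the comparison map says that $F$ is orthogonal to a canonical map of presheaves. A specified colimit cocone is \emph{not}: the source of the comparison $\colim_d F(Qd)\to F(v)$ is a colimit of the sets $\mathrm{Nat}(S(Qd,-),F)$, which is not of the form $\mathrm{Nat}(X,F)$, and the models of a cocone specification are not even closed under limits in $\Set^S$. Concretely, for the sketch with objects $a,b,c$ and one cocone exhibiting $c$ as the coproduct of $a$ and $b$, a product of two models $F,G$ satisfies $(F\times G)(c)=(F(a)\sqcup F(b))\times(G(a)\sqcup G(b))$, which differs from $(F\times G)(a)\sqcup (F\times G)(b)$ by the cross terms; since every orthogonality class is closed under all limits, $\mathsf{Mod}(\cs)$ is not an orthogonality class in $\Set^S$, nor reflective, nor (in general) locally presentable, so no route through orthogonality can possibly work. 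After your (correct) observation that $\mathsf{Mod}(\cs)$ is closed under $\lambda$-directed colimits for suitable $\lambda$, the standard proofs need a genuinely different ingredient: either closure under $\lambda$-pure subobjects plus the criterion that a full subcategory of a locally $\lambda$-presentable category closed under $\lambda$-directed colimits and $\lambda$-pure subobjects is accessible (\cite[Chap.~2]{adamekrosicky94}), or the $2$-categorical limit theorem --- accessibility of inverters/equifiers of natural transformations between accessible functors --- of \cite{Makkaipare}. Either of these is a substantial theorem, not a ``general fact'' about injectivity, and it is entirely missing from your proposal.

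In the other direction there is a smaller but still real defect: the distinguished cones and cocones of a sketch must literally live in its underlying category, and $\ca_\lambda^{\mathrm{op}}$ contains no vertices through which the $\lambda$-flatness conditions could be stated, so the sketch cannot be placed on $S=\ca_\lambda^{\mathrm{op}}$ as you first propose. The repair --- which your ``main obstacle'' paragraph gestures at but does not carry out --- is to take for the underlying category a free completion of $\ca_\lambda$ under $\lambda$-small colimits (dually, a $\lambda$-small limit completion of the opposite), to specify the formal $\lambda$-small (co)limits as the limit part, and to add cocones expressing that every formal vertex is covered by objects coming from $\ca_\lambda$; it is precisely these cocones that encode $\lambda$-filteredness of the category of elements, and they are the reason the resulting sketch is mixed rather than a limit sketch. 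So: right strategy, correctly identified crux, but the two steps carrying the actual mathematical content of the theorem are both left open, and for one of them the proposed mechanism is incorrect.
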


\subsection{Topoi} \label{backgroundtopoi}

Topoi were defined by Grothendieck as a \textit{natural} generalization of the category of sheaves $\mathsf{Sh}(X)$ over a topological space $X$. Their geometric nature was thus the first to be explored and exploited. Yet, with time, many other properties and facets of them have emerged, making them one of the main concepts in category theory between the 80's and 90's. Johnstone, in the preface of \cite{elephant1}, gives 9 different interpretations of what a topos \textit{can be}. In fact, this multi-faced nature of the concept of topos motivates the title of his book. In this paper we will focus on two main aspects.

\begin{itemize}
  \item A topos is a (categorification of the concept of) locale;
  \item A topos is a (family of Morita-equivalent) geometric theory;
\end{itemize}

The first one draws the connection with our previous results in \cite{thgeo}, while allows us to treat a topos as placeholders for a geometric theory.

\begin{ach}
In this section by \textit{topos} we mean Grothendieck topos.
\end{ach}

\begin{defn}[Topos]
A topos $\ce$ is  lex-reflective\footnote{This means that it is a reflective subcategory and that the left adjoint preserves finite limits. Lex stands for \textit{left exact}, and was originally motivated by homological algebra.} subcategory\footnote{Up to equivalence of categories.} of a category of presheaves over a small category, $$i^*: \Set^{C^\circ} \leftrightarrows \ce : i_*.  $$
\end{defn}

\begin{defn}[Geometric morphism]
A geometric morphism of topoi $f: \ce \to \cf$ is an adjunction $f^*: \cf \leftrightarrows \ce: f_*$\footnote{Notice that $f_*$ is the right adjoint.} whose left adjoint preserves finite limits (is left exact). We will make extensive use of the following terminology:
\begin{enumerate}
  \item[$f^*$] is the inverse image functor;
  \item[$f_*$] is the direct image functor.
  \end{enumerate}
\end{defn}

\begin{defn}[$2$-category of Topoi]
The $2$-category of topoi has topoi as objects, geometric morphisms as $1$-cells and natural transformations between left adjoints as $2$-cells.
\end{defn}

\subsubsection{Literature}
There are several standard references for the theory of topoi. Most of the technical content of the paper can be understood via \cite{sheavesingeometry}, a reference that we strongly suggest to start and learn topos theory. Unfortunately, the approach of \cite{sheavesingeometry} is a bit different from ours, and even though its content is sufficient for this paper, the intuition that is provided is not $2$-categorical enough for our purposes. The reader might have to integrate with the encyclopedic \cite{elephant1,elephant2}.  A couple of constructions that are quite relevant to us are contained only in \cite{borceux_19943}, that is otherwise very much equivalent to \cite{sheavesingeometry}.



\subsubsection{Topoi and Geometry}
It's a bit hard to convey the relationship between topos theory and geometry in a short subsection. We mainly address the reader to \cite{leinster2010informal}. Let us just mention that to every topological space $X$, one can associate its category of sheaves $\mathsf{Sh}(X)$ (and this category is a topos), moreover, this assignment is a very strong topological invariant. For this reason, the study of $\mathsf{Sh}(X)$ is equivalent to the study of $X$ from the perspective of the topologist, and is very convenient in algebraic geometry and algebraic topology. For example, the category of sets is the topos of sheaves over the one-point-space, $$ \Set \cong \mathsf{Sh}(\bullet) $$ for this reason, category-theorists sometime call $\Set$ \textit{the point}. This intuition is consistent with the fact that $\Set$ is the terminal object in the category of topoi. Moreover, as a point $p \in X$ of a topological space  $X$ is a continuous function $p: \bullet \to X$, a point of a topos $\cg$ is a geometric morphism $p: \Set \to \cg$. Parallelisms of this kind have motivated most of the definitions of topos theory and most have led to results very similar to those that were achieved in formal topology (namely the theory of locales). The class of points of a topos $\ce$ has a structure of category $\mathsf{pt}(\ce)$ in a natural way, the arrows being natural transformations between the inverse images of the geometric morphisms.

\subsubsection{Topoi and Logic}
Geometric logic and topos theory are tightly bound together. Indeed, for a geometric theory $\mathbb{T}$ it is possible to build a topos $\Set[\mathbb{T}]$ (the classifying topos of $\mathbb{T}$) whose category of points is precisely the category of models of $\mathbb{T}$,
$$ \mathsf{Mod}(\mathbb{T}) \cong \mathsf{pt}(\Set[\mathbb T]). $$
This amounts to the theory of classifying topoi \cite[Chap. X]{sheavesingeometry} and each topos classifies a geometric theory. This gives us a logical interpretation of a topos. Each topos is a \textit{geometric theory}, which in fact can be recovered from any of its sites of definition. Obviously, for each site that describes the same topos we obtain a different theory. Yet, these theories have the same category of models (in any topos). In this paper we will exploit the construction of \cite{borceux_19943} to show that to each \textit{geometric} sketch (a kind of theory), one can associate a topos whose points are precisely the models of the sketch. This is another way to say that the category of topoi can internalize a geometric logic.

\subsubsection{Special classes of topoi}
In the paper we will study some relevant classes of topoi. In this subsection we recall all of them and give a good reference to check further details. These references will be repeated in the relevant sections.

\begin{table}[!htbp]
\begin{tabular}{lllll}
\cline{2-3}
\multicolumn{1}{l|}{} & \multicolumn{1}{c|}{Topoi} & \multicolumn{1}{c|}{Reference} &  &  \\ \cline{2-3}
\multicolumn{1}{l|}{} & \multicolumn{1}{c|}{connected} & \multicolumn{1}{c|}{{\cite[C1.5.7]{elephant2}}} &  &  \\ \cline{2-3}
\multicolumn{1}{l|}{} & \multicolumn{1}{c|}{compact} & \multicolumn{1}{c|}{{\cite[C3.2]{elephant2}}} &  &  \\ \cline{2-3}
\multicolumn{1}{l|}{} & \multicolumn{1}{c|}{atomic} & \multicolumn{1}{c|}{{\cite[C3.5]{elephant2}}} &  &  \\ \cline{2-3}
\multicolumn{1}{l|}{} & \multicolumn{1}{c|}{locally decidable} & \multicolumn{1}{c|}{{\cite[C5.4]{elephant2}}} &  &  \\ \cline{2-3}
\multicolumn{1}{l|}{} & \multicolumn{1}{c|}{coherent} & \multicolumn{1}{c|}{{\cite[D3.3]{elephant2}}} &  &  \\ \cline{2-3}
\multicolumn{1}{l|}{} & \multicolumn{1}{c|}{boolean} & \multicolumn{1}{c|}{{\cite[D3.4, D4.5]{elephant2}, \cite[A4.5.22]{elephant1}}} &  &  \\ \cline{2-3}

\end{tabular}
\end{table}

\subsection{Ionads} \label{backgroundionads}

Ionads were defined by Garner in \cite{ionads}; together with our \cite{thgeo}, this is all the literature available on the topic. Garner's definition is designed to generalize the definition of topological space. Indeed a topological space $\cx$ is the data of a set (of points) and an interior operator, $$\text{Int}: 2^X \to 2^X.$$ Garner builds on the well known analogy between powerset and presheaf categories and extends the notion of interior operator to a presheaf category. The whole theory is extremely consistent with the expectations: while the poset of (co)algebras for the interior operator is the locale of open sets of a topological space, the category of coalgebras of a ionad is a topos, a natural categorification of the concept of locale. In our paper \cite{thgeo} we have provided a more refined notion of ionads which will be very important for our constructions. Garner's notion would not apply to any of our cases.

\subsection{Garner's definitions}
\begin{defn}[Ionad]
An ionad $\cx = (X, \text{Int})$ is a set $X$ together with a comonad $\text{Int}: \Set^X \to \Set^X$ preserving finite limits.
\end{defn}

\begin{defn}[Category of opens of a ionad]
The category of opens $\mathbb{O}(\cx)$ of a ionad $\cx = (X, \text{Int})$ is the category of coalgebras of $\text{Int}$. We shall denote by $U_{\cx}$ the forgetful functor $U_\cx: \mathbb{O}(\cx) \to \Set^X$.  
\end{defn}

\begin{defn}[Morphism of Ionads]
A morphism of ionads $f: \cx \to \cy$ is a couple $(f, f^\sharp)$ where $f: X \to Y$ is a set function and $f^\sharp$ is a lift of $f^*$,
\begin{center}
\begin{tikzcd}
\mathbb{O}(\cy) \arrow[rr, "f^\sharp" description] \arrow[dd, "U_\cy" description] &  & \mathbb{O}(\cx) \arrow[dd, "U_\cx" description] \\
                                                                               &  &                                                  \\
\Set^Y \arrow[rr, "f^*" description]                                           &  & \Set^X                                          
\end{tikzcd}
\end{center}
\end{defn}

\begin{defn}[Specialization of morphism of ionads]
Given two morphism of ionads $f,g: \cx \to \cy$, a specialization of morphism of ionads $\alpha: f \Rightarrow g$ is a natural transformation between $f^\sharp$ and $g^\sharp$,
\begin{center}
\begin{tikzcd}
\mathbb{O}(\cy) \arrow[r, "f^\sharp" description, bend left=35, ""{name=U, below}]
\arrow[r,"g^\sharp" description, bend right=35, ""{name=D}]
& \mathbb{O}(\cx)
\arrow[Rightarrow, "\alpha" description, from=U, to=D] \end{tikzcd}
\end{center}
\end{defn}

\begin{defn}[$2$-category of Ionads]
The $2$-category of ionads has ionads as objects, morphism of ionads as $1$-cells and specializations as $2$-cells.
\end{defn}

\begin{defn}[Bounded Ionads]
A ionad $\cx$ is bounded if $\mathbb{O}(\cx)$ is a topos.
\end{defn}

\subsection{A generalization of Garner's ionads}
In his paper, Garner mentions that in giving the definition of ionad he could have chosen a category instead of a set \cite[Rem. 2.4]{ionads}. For technical reasons we cannot be content with Garner's original definition, thus we will allow ionads over a category (as opposed to sets), even a locally small (but possibly large) one.

\begin{defn}[{\cite[Def 2.2.2]{thgeo}} Generalized Ionads]
A generalized ionad $\cx = (X, \text{Int})$ is a locally small (but possibly large) pre-finitely cocomplete category $X$ together with a lex comonad $\text{Int}: \P(X) \to \P(X)$.
\end{defn}

\begin{ach}
We will always omit the adjective \textit{generalized}.
\end{ach}

\begin{rem} The notion of generalized ionad has some delicate aspects, that we cannot discuss in this paper, we refer to our discussion in \cite[Sec. 2]{thgeo} for a complete introduction to the topic. What is important to remind is that, despite the decnical issues, the very idea of Garner remains completely available, and all the other assumptions take care of set-theoretic subtleties generated by the fact that we study possibly large categories insted of crude sets.
\end{rem}

\begin{rem} The main results of \cite[Thm. 3.2.6 and 4.0.3]{thgeo} provide an adjunction between the $2$-category of ionads and the $2$-category of topoi which categories the adjunction between topological spaces and locales. Since the latter can be seen as a completeness theorem for propositional logic, we will use \cite[Thm. 3.2.6 and 4.0.3]{thgeo} in this paper to deduce completeness-like theorems for geometric logic.
\end{rem}

\section{Generalized axiomatizations and the Scott construction}  \label{logicgeneralizedaxiom}

\begin{rem}\label{groups}
Let $\mathsf{Grp}$ be the category of groups and  $\mathsf{U}: \mathsf{Grp} \to \Set$ be the forgetful functor. The historical starting point of a categorical understanding of universal algebra was precisely that one can recover the (a maximal presentation of) the algebraic theory of groups from $\mathsf{U}$. Consider all the natural transformations of the form \[\mu: \mathsf{U}^n \Rightarrow \mathsf{U}^m, \]
these can be seen as implicitly defined operations of groups. If we gather these operations in an equational theory $\mathbb{T}_\mathsf{U}$, we see that the functor $\mathsf{U}$ lifts to the category of models $\mathsf{Mod}(\mathbb{T}_\mathsf{U})$ as indicated by the diagram below.

\begin{center}
\begin{tikzcd}
\mathsf{Grp} \arrow[rdd, "\mathsf{U}" description] \arrow[r, dotted] & \mathsf{Mod}(\mathbb{T}_\mathsf{U}) \arrow[dd, "|-|" description] \\
                                                                     &                                                          \\
                                                                     & \Set                                                    
\end{tikzcd}
\end{center}
It is a quite classical result that the comparison functor above is fully faithful and essentially surjective, thus we have axiomatized the category of groups (probably with a non minimal family of operations).
\end{rem}

\begin{rem}\label{lawvereliterature}
The idea above was introduced in Lawvere's PhD thesis \cite{lawvere1963functorial} and later developed in great generality by Linton \cite{10.1007/978-3-642-99902-4_3,10.1007/BFb0083080}. The interested reader might find interesting \cite[Chap. 3]{adamekrosicky94} and the expository paper \cite{HYLAND2007437}.   Nowadays this is a standard technique in categorical logic and some generalizations of it were presented in \citep{infinitarylang} by Rosický and later again in \citep[Rem. 3.5]{LB2014}.
\end{rem}

\begin{rem}[Lieberman-Rosický construction]
In  \citep[Rem. 3.5]{LB2014} given a couple $(\ca, \mathsf{U})$ where $\ca$ is an a accessible category with directed colimits together with a faithful functor $\mathsf{U}: \ca \to \Set$ preserving directed colimits, the authors form a category $\mathbb{U}$ whose objects are finitely accessible sub-functors of $\mathsf{U}^n$ and whose arrows are natural transformations between them. Of course there is a naturally attached signature $\Sigma_U$ and a naturally attached first order theory $\mathbb{T}_\mathsf{U}$.  In the same fashion as the previous remarks one finds a comparison functor $\ca \to \Sigma_\mathsf{U}\text{-Str}$. In \citep[Rem. 3.5]{LB2014} the authors stress that is the most natural candidate to axiomatize $\ca$. A model of $\mathbb{T}_\mathsf{U}$ is the same as a functor $\mathbb{U} \to \Set$ preserving products and subobjects. Of course the functor $\ca \to \Sigma_\mathsf{U}\text{-Str}$ factors through $\text{Mod}(\mathbb{U})$ (seen as a sketch) \[l: \ca \to \text{Mod}(\mathbb{U}),\] but in \citep[Rem. 3.5]{LB2014} this was not the main concern of the authors.
\end{rem}

\begin{rem}[Rosický's remark] 
Rem. \ref{groups} ascertains that the collection of functor $\{ \mathsf{U}^n\}_{n \in \mathbb{N}}$, together with all the natural transformations between them, retains all the informations about the category of groups. Observe that in this specific case, the functors $ \mathsf{U}^n$ all preserve directed colimits, because finite limits commute with directed colimits. More generally, when $\ca$ does not come equipped with a special forgetful functor, or simply we don't want to choose a specific one, we could follow the general strategy of the remarks above and collect \text{all} the functors preserving directed colimits into $\Set$ in a category. This is the Scott construcion.
\end{rem}

 \begin{con}[The Scott construction]\label{defnS}
We recall the construction of $\mathsf{S}$ from \cite{simon} and \cite{thcat}. Let $\ca$ be an accessible category with directed colimits.  $\mathsf{S}(\ca)$ is defined as the category the category of functors preserving directed colimits into sets. \[\mathsf{S}(\ca) = \text{Acc}_{\omega}(\ca, \Set).\]
The category $\mathsf{S}(\ca)$ is a Grothendieck topos and thus can be seen as a geometric theory. Following the discussion above, this is a candidate geometric axiomatization of $\ca$. In \cite{thcat} we study the Scott construction and show that it is functorial, providing a left adjoint for the functor of points.
\end{con}

\begin{rem}[The functor $\mathsf{pt}$]\label{pt}
The functor of points $\mathsf{pt}: \text{Topoi} \to \text{Acc}_\omega$ belongs to the literature since quite some time, $\mathsf{pt}$ is the covariant hom functor $\text{Topoi}(\Set, - )$. It maps a Grothendieck topos $\cg$ to its category of points, \[\cg \mapsto \text{Cocontlex}(\cg, \Set).\]
Of course given a geometric morphism $f: \cg \to \ce$, we get an induced morphism $\mathsf{pt}(f): \mathsf{pt}(\cg) \to \mathsf{pt}{(\ce)}$ mapping $p^* \mapsto p^* \circ f^*$. The fact that $\text{Topoi}(\Set, \cg)$ is an accessible category with directed colimits appears in the classical reference by Borceux as \cite[Cor. 4.3.2]{borceux_19943}, while the fact that $\mathsf{pt}(f)$ preserves directed colimits follows trivially from the definition. 
\end{rem}

\begin{rem}
When we idenitify the category of topoi with a localization of the category of geometric theories, the functor of points is computing the (set theoretic) models of the theory classified by the topos. Being a right adjoint, it is coherent with the intuition that its left adjoint computes the \textit{free theory} over an accessible category with directed colimits.
\end{rem}

\begin{thm}[{\cite[Prop. 2.3]{simon},\cite[Thm. 2.1]{thcat}} The Scott adjunction]\label{scottadj}
The $2$-functor of points $\mathsf{pt} :\text{Topoi} \to \text{Acc}_{\omega} $ has a left biadjoint $\mathsf{S}$, yielding the Scott biadjunction, $$\mathsf{S} : \text{Acc}_{\omega} \leftrightarrows \text{Topoi}: \mathsf{pt}. $$
\end{thm}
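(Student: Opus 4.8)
The plan is to establish the biadjunction by exhibiting, for every accessible category with directed colimits $\ca$ and every topos $\cg$, a natural equivalence of hom-categories
\[
\text{Topoi}(\mathsf{S}(\ca), \cg) \;\simeq\; \text{Acc}_{\omega}(\ca, \mathsf{pt}(\cg)),
\]
and to do this by showing that \emph{both} sides are equivalent to one and the same category of bifunctors, making the biadjunction a manifestation of a two-variable symmetry. Concretely, let $\cb(\ca,\cg)$ be the category whose objects are functors $H\colon \ca \times \cg \to \Set$ such that $H(a,-)\colon \cg \to \Set$ is left exact and cocontinuous (a point of $\cg$) for each $a \in \ca$, while $H(-,c)\colon \ca \to \Set$ preserves directed colimits for each $c \in \cg$, with natural transformations as morphisms. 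I claim the two evident currying functors identify each hom-category with $\cb(\ca,\cg)$; composing them yields the equivalence, and since every step is a transposition, the result is pseudonatural in $\ca$ and $\cg$, which is exactly the data of the biadjunction.

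For the topos side I would first recall that $\mathsf{S}(\ca)$ is a Grothendieck topos (\Cref{defnS}) and observe that the inclusion $\mathsf{S}(\ca) \hookrightarrow [\ca,\Set]$ creates finite limits and all colimits: a finite limit of directed-colimit-preserving functors still preserves directed colimits because finite limits commute with directed colimits in $\Set$, and a colimit of such functors does so because colimits commute among themselves. Hence both kinds of (co)limit in $\mathsf{S}(\ca)$ are computed pointwise. Next, a geometric morphism $f\colon \mathsf{S}(\ca) \to \cg$ is precisely its inverse image $f^*\colon \cg \to \mathsf{S}(\ca)$, a left exact cocontinuous functor; conversely any such functor is an inverse image, since a cocontinuous functor between locally presentable categories has a right adjoint by the adjoint functor theorem, so $f_*$ exists automatically. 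Transposing $f^*$ to the bifunctor $H(a,c) := f^*(c)(a)$ and invoking pointwiseness, the membership $f^*(c) \in \mathsf{S}(\ca)$ becomes condition (ii) and the left exactness and cocontinuity of $f^*$ (checked pointwise in $a$) become condition (i). This gives $\text{Topoi}(\mathsf{S}(\ca),\cg) \simeq \cb(\ca,\cg)$.

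For the accessible side, recall that $\mathsf{pt}(\cg)$ is accessible with directed colimits (\Cref{pt}, \cite[Cor. 4.3.2]{borceux_19943}), and that its inclusion into $[\cg,\Set]$ creates directed colimits: a directed colimit of points is again left exact (finite limits commute with directed colimits in $\Set$) and cocontinuous, hence is computed pointwise. Given $g \in \text{Acc}_{\omega}(\ca,\mathsf{pt}(\cg))$, transpose it to $H(a,c) := g(a)^*(c)$, where $g(a)^*$ is the inverse image of the point $g(a)$; then $H(a,-)$ is a point by construction (condition (i)), while preservation of directed colimits by $g$ together with their pointwiseness in $\mathsf{pt}(\cg)$ is exactly the statement that $H(-,c)$ preserves directed colimits (condition (ii)). This yields $\text{Acc}_{\omega}(\ca,\mathsf{pt}(\cg)) \simeq \cb(\ca,\cg)$, and composing the two identifications produces the desired hom-equivalence; on $2$-cells both send a natural transformation to the corresponding natural transformation of bifunctors, so these are genuine equivalences of categories. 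As a sanity check, the unit $\eta_\ca\colon \ca \to \mathsf{pt}(\mathsf{S}(\ca))$ is recovered as $a \mapsto \mathrm{ev}_a$, evaluation at $a$, which is a point of $\mathsf{S}(\ca)$ precisely because (co)limits there are pointwise.

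The main obstacle I expect is not the formal two-variable symmetry but the pointwise-(co)limit lemmas together with the attendant size bookkeeping. Since $\ca$ and $\cg$ are genuinely large, $[\ca,\Set]$ and $[\cg,\Set]$ are not presheaf topoi, so one must either carry out the pointwise computations directly in these large functor categories or reduce along a small dense subcategory $\ca_\lambda$, all while keeping track that $\mathsf{S}(\ca)$ is really a topos and that the adjoint functor theorem genuinely applies to recover $f_*$. Once the creation of the relevant (co)limits is secured on both sides, the remainder is the essentially formal bifunctor identification and a careful but routine verification of the pseudonaturality and coherence of the resulting biadjunction.
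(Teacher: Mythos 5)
Your proposal is correct and takes essentially the same route as the paper's source: the paper itself only cites this theorem, and the proofs it points to (\cite[Prop. 2.3]{simon}, \cite[Thm. 2.1]{thcat}) establish the hom-equivalence by exactly your currying argument, identifying both $\text{Topoi}(\mathsf{S}(\ca),\cg)$ and $\text{Acc}_\omega(\ca,\mathsf{pt}(\cg))$ with bifunctors $\ca \times \cg \to \Set$ that are lex and cocontinuous in one variable and preserve directed colimits in the other, with the pointwise-(co)limit observations playing the same role. Nothing essential is missing from your argument.
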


\begin{rem}[Rosický's remark] 
 Going back to Rosický-Lieberman construction, the previous discussion implies that the small category $\{ \mathsf{U}^n\}_{n \in \mathbb{N}}$ is a full subcategory of the Scott topos of the category of groups. In fact the vocabulary of the theory that we used to axiomatize the category of groups is made up of symbols coming from a full subcategory of the Scott topos.
\end{rem}

\begin{rem}[Generalized axiomatizations]
The generalized axiomatization of Lieberman and Rosický amounts to a sketch $\mathbb{U}$. As we mentioned, there exists an obvious inclusion of $\mathbb{U}$ in the Scott topos of $\ca$, $$i: \mathbb{U} \to \mathsf{S}(\ca)$$ which is a flat functor because finite limits in $\mathsf{S}(\ca)$ are computed pointwise in $\Set^\ca$. Thus, every point $p: \Set \to \mathsf{S}(\ca)$ induces a model of the sketch $\mathbb{U}$ by  composition,
$$i^*: \mathsf{pt}(\mathsf{S}\ca) \to \text{Mod}(\mathbb{U})$$
$$p \mapsto  p^* \circ i.$$
In particular this shows that the unit of the Scott adjunction lifts the comparison functor between $\ca$ and $\text{Mod}(\mathbb{U})$ along $i^*$ and thus the Scott topos provides a \textit{sharper} axiomatization of $\mathbb{T}_\mathsf{U}$.

\begin{center}
\begin{tikzcd}
                                                         & \ca \arrow[ldd, "\eta_\ca" description, bend right] \arrow[rdd, "l" description, bend left] &                        \\
                                                         &                                                                                             &                        \\
\mathsf{pt}\mathsf{S}(\ca) \arrow[rr, "i^*" description] &                                                                                             & \text{Mod}(\mathbb{U})
\end{tikzcd}
\end{center}
\end{rem}

\begin{rem}[Faithful functors are likely to generate the Scott topos] Yet, it should be noticed that when $\mathbb{U}$ is a generator in $\mathsf{S}(\ca)$, the functor $i^*$ is an equivalence of categories. As unlikely as it may sound, in all the examples that we can think of, a generator of the Scott topos is always given by a faithful forgetful functor $\mathsf{U}: \ca \to \Set$. This phenomenon is so pervasive that the author has believed for quite some time that an object in the Scott topos $\mathsf{S}(\ca)$ is a generator if and only if it is faithful and conservative. We still lack a counterexample, or a theorem proving such a statement.
\end{rem}
 
\section{Classifying topoi} \label{logicclassifyingtopoi}

This section is devoted to specifying the connection between Scott topoi, Isbell topoi and classifying topoi. Recall that for a geometric theory $\mathbb{T}$, a classifying topos $\Set[\mathbb{T}]$ is a topos representing the functor of models in topoi, \[ \mathsf{Mod}_{(-)}(\mathbb{T}) \cong \text{Topoi}(-, \Set[\mathbb{T}]).\]  The theory of classifying topoi allows us to internalize geometric logic in the internal logic of the $2$-category of topoi. The reader that is not familiar with the theory of classifying topoi is encouraged to check the Appendix.

\subsection{Categories of models, Scott topoi and classifying topoi} 

The Scott topos $\mathsf{S}(\mathsf{Grp})$ of the category of groups is $\Set^{\mathsf{Grp}_{\omega}}$, this follow from \cite[Rem 2.13]{thcat} and applies to $\mathsf{Mod}(\mathbb{T})$ for every Lawvere theory $\mathbb{T}$. It is well known that $\Set^{\mathsf{Grp}_{\omega}}$ is also the classifying topos of the theory of groups. This section is devoted to understating if this is just a coincidence, or if the Scott topos is actually related to the classifying topos.

\begin{rem}
Let $\ca$ be an accessible category with directed colimits. In order to properly ask the question \textit{is $\mathsf{S}(\ca)$ the classifying topos?}, we should answer the question \textit{the classifying topos of what?} Indeed $\ca$ is just a category, while one can compute classifying topoi of theories. Our strategy is to introduce a quite general notion of theory that fits in the following diagram,

\begin{center}
\begin{tikzcd}
\text{Acc}_\omega \arrow[rr, "\mathsf{S}" description, bend right=10] &                                                                                      & \text{Topoi} \arrow[ll, "\mathsf{pt}" description, bend right=10] \\
                                                   &                                                                                      &                                                \\
                                                   &                                                                                      &                                                \\
                                                   & \mathsf{Theories} \arrow[dotted, luuu, "\mathsf{Mod}(-)" description, bend left=20] \arrow[dotted, ruuu, "\gimel(-)" description, bend right=20] &                                               
\end{tikzcd}
\end{center}

in such a way that:

\begin{enumerate}
  \item $\gimel(\mathbb{T})$ gives the classifying topos of $\mathbb{T}$;
  \item $\mathsf{Mod}(-) \cong \mathsf{pt} \gimel (-)$.
\end{enumerate} 

In this new setting we can reformulate our previous discussion in the following mathematical question: \[\gimel(-) \stackrel{?}{\cong} \mathsf{S} \mathsf{Mod}(-).\]

\end{rem}

\begin{rem}[Geometric Sketches]
The notion of theory that we plan to use is that of geometric sketch. The category of (small) sketches was described in \cite[3.1]{Makkaipare}, while a detailed study of geometric sketches was conducted in \cite{adamek_johnstone_makowsky_rosicky_1997,Admek1996OnGA}.

\begin{center}
\begin{tikzcd}
\text{Acc}_\omega \arrow[rr, "\mathsf{S}" description, bend right=10] &                                                                                      & \text{Topoi} \arrow[ll, "\mathsf{pt}" description, bend right=10] \\
                                                   &                                                                                      &                                                \\
                                                   &                                                                                      &                                                \\
                                                   & \mathsf{GSketches} \arrow[luuu, "\mathsf{Mod}(-)" description, bend left=20] \arrow[ruuu, "\gimel(-)" description, bend right=20] &                                               
\end{tikzcd}
\end{center}
\end{rem}

\begin{rem}
Following \cite{Makkaipare}, there exists a natural way to generate a sketch from any accessible category. This construction, in principle, gives even  a left adjoint for the functor $\mathsf{Mod}(-)$, but does land in large sketches. Thus it is indeed true that for each accessible category there exist a sketch (a theory) canonically associated to it. We do not follow this line because the notion of large sketch, from a philosophical perspective, is a bit unnatural. Syntax should always be very frugal. From an operational perspective, presentations should always be as small as possible. It is possible to cut down the size of the sketch, but this construction cannot be defined functorially on the whole category of accessible categories with directed colimits. Since elegance and naturality is one of the main motivations for this treatment of syntax-semantics dualities, we decided to avoid any kind of non-natural construction.
\end{rem}

\begin{rem}
Geometric sketches contain  coherent  sketches. In the dictionary between logic and geometry that is well motivated in the indicated papers (\cite{adamek_johnstone_makowsky_rosicky_1997,Admek1996OnGA}) these two classes correspond respectively to geometric and coherent theories. The latter essentially contain all first order theories via the process of Morleyzation. These observations make our choice of geometric sketches a very general notion of theory and makes us confident that it's a good notion to look at.
\end{rem}

 We now proceed to describe the two functors labeled with the name of $\mathsf{Mod}$ and $\gimel$.

\begin{rem}[Mod]
This 2-functor is very easy to describe. To each sketch $\cs$ we associate its category of Set-models, while it is quite evident that a morphism of sketches induces by composition a functor preserving directed colimits (see Sec. \ref{backgroundsketches} in the Background section).
\end{rem}

\begin{con}[$\gimel$]
The topos completion of a geometric sketch is a highly nontrivial object to describe. Among the possible constructions that appear in the literature, we refer to \citep[4.3]{borceux_19943}. Briefly, the idea behind this construction is the following. 

\begin{enumerate} 
  \item By {\citep[4.3.3]{borceux_19943}}, every sketch $\cs$ can be completed to a sketch $\bar{\cs}$ whose underlying category is cartesian.
  \item By {\citep[4.3.6]{borceux_19943}}, this construction is functorial and does not change the model of the sketch in any Grothendieck topos.
  \item By {\citep[4.3.8]{borceux_19943}}, the completion of the sketch has a natural topology $\bar{J}$.
  \item The correspondence $\cs \mapsto \bar{\cs} \mapsto (\bar{S}, \bar{J})$ transforms geometric sketches into sites and morphism of sketches into morphism of sites.
  \item We compute sheaves over the site $(\bar{S}, \bar{J})$. 
  \item Define $\gimel$ to be $\cs \mapsto \bar{\cs} \mapsto (\bar{S}, \bar{J}) \mapsto \mathsf{Sh}(\bar{S}, \bar{J})$.
\end{enumerate}
\end{con}

\begin{rem}
While \citep[4.3.6]{borceux_19943} proves that  $\mathsf{Mod}(-) \simeq \mathsf{pt} \gimel (-)$, and \citep[4.3.8]{borceux_19943} prove that $\gimel(\cs)$ is the classifying topos of $\cs$ among Grothendieck topoi, the main question of this section remains completely open, is $\gimel(\cs)$ isomorphic to the Scott topos $\mathsf{S} \mathsf{Mod}(-)$ of the category of Set models of $\cs$? We answer this question with the following theorem.
\end{rem}

\begin{thm}\label{classificatore}
If the counit $\epsilon_{\gimel(\cs)}$ of the Scott adjunction is an equivalence of categories on $\gimel(\cs)$, then $\gimel(\cs)$ coincides with $\mathsf{S} \mathsf{Mod}(\cs)$.
\end{thm}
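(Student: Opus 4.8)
The plan is to read off the conclusion from the shape of the Scott counit once its source is rewritten via the already-recalled identification $\mathsf{pt}\gimel(\cs) \simeq \mathsf{Mod}(\cs)$. By definition of the Scott biadjunction (\Cref{scottadj}), the counit component at a topos $\cg$ is a geometric morphism $\epsilon_\cg\colon \mathsf{S}\mathsf{pt}(\cg) \to \cg$; instantiating at $\cg = \gimel(\cs)$ yields
\[\epsilon_{\gimel(\cs)}\colon \mathsf{S}\mathsf{pt}(\gimel(\cs)) \longrightarrow \gimel(\cs).\]
So the first thing to pin down is the source $\mathsf{S}\mathsf{pt}(\gimel(\cs))$.

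Next I would invoke the equivalence $\mathsf{pt}\gimel(\cs) \simeq \mathsf{Mod}(\cs)$ of \citep[4.3.6]{borceux_19943} recalled above, which holds in $\text{Acc}_\omega$, the comparison being realized by a functor preserving directed colimits. Since $\mathsf{S}$ is a $2$-functor it preserves equivalences, so applying it transports this to an equivalence of topoi
\[\mathsf{S}\mathsf{pt}(\gimel(\cs)) \simeq \mathsf{S}\mathsf{Mod}(\cs).\]
Thus the source of the counit is, up to equivalence, exactly the Scott topos $\mathsf{S}\mathsf{Mod}(\cs)$.

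It remains to compose. Under the hypothesis that $\epsilon_{\gimel(\cs)}$ is an equivalence, chaining it with the equivalence of the previous step produces a zig-zag of equivalences
\[\mathsf{S}\mathsf{Mod}(\cs) \simeq \mathsf{S}\mathsf{pt}\gimel(\cs) \xrightarrow{\ \epsilon_{\gimel(\cs)}\ } \gimel(\cs),\]
exhibiting $\gimel(\cs)$ and $\mathsf{S}\mathsf{Mod}(\cs)$ as equivalent topoi, which is the assertion. The argument carries no deep content: the one point deserving care is that the identification $\mathsf{pt}\gimel(\cs) \simeq \mathsf{Mod}(\cs)$ really lives in $\text{Acc}_\omega$, so that $\mathsf{S}$ may legitimately be applied to it, and --- because we work with a biadjunction rather than a strict adjunction --- that the coherence isomorphisms are tracked so that $\epsilon_{\gimel(\cs)}$ is read as a geometric morphism out of $\mathsf{S}\mathsf{Mod}(\cs)$. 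Both are harmless $2$-categorical bookkeeping, and there is no genuine obstacle beyond them.
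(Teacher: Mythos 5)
Your proof is correct and follows exactly the paper's own argument: identify the source of the counit $\mathsf{S}\mathsf{pt}(\gimel(\cs))$ with $\mathsf{S}\mathsf{Mod}(\cs)$ via the equivalence $\mathsf{pt}\gimel(-) \simeq \mathsf{Mod}(-)$, then compose with the counit assumed to be an equivalence. The paper states this more tersely, leaving implicit the step where $\mathsf{S}$ (as a $2$-functor) is applied to the equivalence --- a point you rightly make explicit.
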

\begin{proof}
We introduced enough technology to make this proof incredibly slick. Recall the counit \[\mathsf{S} \mathsf{pt} (\gimel(\cs)) \to \gimel(\cs) \] and assume that it is an equivalence of categories. Now, since $\mathsf{Mod}(-) \simeq \mathsf{pt} \gimel (-)$, we obtain that \[\gimel(\cs) \simeq \mathsf{S}\mathsf{Mod}(\cs),\] which indeed it our thesis.
\end{proof}

\begin{rem}
\cite[Thm 4.1.3 and 4.3.3]{thgeo}  characterize those topoi for which the counit is an equivalence of categories, providing a full description of those geometric sketches for which $\gimel(\cs)$ coincides with $\mathsf{S} \mathsf{Mod}(\cs)$. 
\end{rem}

\subsection{Ionads of models, Isbell topoi and classifying topoi}

Indeed the main result of this section up to this point has been partially unsatisfactory. As happens sometimes, the answer is not as nice as expected because the question in the first place did not take in consideration some relevant factors. The category of models of a sketch does not retain enough information on the sketch. Fortunately, we will show that every sketch has a ionad of models (not just a category) and the category of opens of this ionad is a much better approximation of the classifying topos.
In this subsection, we switch diagram of study to the one below.

\begin{center}
\begin{tikzcd}
\text{BIon} \arrow[rr, "\mathbb{O}" description, bend right=10] &                                                                                      & \text{Topoi} \arrow[ll, "\mathbbm{pt}" description, bend right=10] \\
                                                   &                                                                                      &                                                \\
                                                   &                                                                                      &                                                \\
                                                   & \mathsf{LGSketches} \arrow[luuu, "\mathbb{M}\mathbbm{od}(-)" description, bend left=20] \arrow[ruuu, "\gimel(-)" description, bend right=20] &                                               
\end{tikzcd}
\end{center}

Of course, in order to study it, we need to introduce all its nodes and legs. We should say what we mean by $\mathsf{LGSketches}$ and $\mathbb{M}\mathbbm{od}(-)$. The adjunction $\mathbb{O} \dashv \mathbbm{pt}$ was introduced and studied in \cite{thgeo} and it relates topoi to bounded ionads, we refer to \cite[Sec. 3]{thgeo} for the construction, while an introduction to ionads can be found in the Appendix.

Whatever $\mathsf{LGSketches}$ and $\mathbb{M}\mathbbm{od}(-)$ will be, the main point of the section is to show that this diagram fixes the one of the previous section, in the sense that we will obtain the following result.

 \begin{thm*} The following are equivalent:
 \begin{itemize} 
  \item $\gimel(\cs)$ has enough points;
  \item $\gimel(\cs)$ coincides with $\mathbb{O}\mathbb{M}\mathbbm{od}(\cs)$.
 \end{itemize}
 \end{thm*}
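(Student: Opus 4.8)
The plan is to reduce the whole statement to the categorified Isbell duality of \cite{thgeo}, in exactly the same slick manner in which \Cref{classificatore} was reduced to the Scott counit. The crucial structural input is the identification of the ionad of models with the ionad of points of the classifying topos, namely $\mathbb{M}\mathbbm{od}(\cs) \simeq \mathbbm{pt}(\gimel(\cs))$ as bounded ionads. This is the ionadic upgrade of the classical equivalence $\mathsf{Mod}(\cs) \simeq \mathsf{pt}(\gimel(\cs))$ of \cite[4.3.6]{borceux_19943}: the underlying category of points is unchanged, and the interior operator is precisely the lex comonad transported from $\gimel(\cs)$ along the biadjunction $\mathbb{O}\dashv\mathbbm{pt}$. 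This is indeed how the node $\mathbb{M}\mathbbm{od}(-)$ is meant to be set up, so that the relation $\mathbb{M}\mathbbm{od}(-)\simeq\mathbbm{pt}\gimel(-)$ mirrors the relation $\mathsf{Mod}(-)\simeq\mathsf{pt}\gimel(-)$ of the previous section.

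Granting this, I would first record that $\mathbb{O}\mathbb{M}\mathbbm{od}(\cs)\simeq\mathbb{O}\mathbbm{pt}(\gimel(\cs))$, which is immediate upon applying $\mathbb{O}$. Next I would observe that the natural comparison $\mathbb{O}\mathbbm{pt}(\gimel(\cs))\to\gimel(\cs)$ is nothing but the counit $\epsilon_{\gimel(\cs)}$ of the Isbell biadjunction $\mathbb{O}\dashv\mathbbm{pt}$ evaluated at the topos $\gimel(\cs)$. Thus the assertion that $\gimel(\cs)$ coincides with $\mathbb{O}\mathbb{M}\mathbbm{od}(\cs)$ is literally the statement that this counit is an equivalence of categories, and the two conditions of the theorem become two faces of one and the same comparison map.

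The final step invokes the main result of \cite{thgeo} (\cite[Thm. 4.0.3]{thgeo}), which characterizes precisely those topoi for which the counit of $\mathbb{O}\dashv\mathbbm{pt}$ is an equivalence: these are exactly the topoi with enough points, in perfect analogy with the classical fact that the counit $\mathcal{O}\mathbbm{pt}(L)\to L$ of the Isbell adjunction between spaces and locales is an isomorphism iff the locale $L$ is spatial. Applying this with $\cg=\gimel(\cs)$ yields the equivalence of the two bullets and closes the argument.

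I expect the only genuine content --- and hence the main obstacle --- to lie in the first step, i.e.\ in justifying $\mathbb{M}\mathbbm{od}(\cs)\simeq\mathbbm{pt}(\gimel(\cs))$ at the level of ionads rather than of mere categories of points: one must verify that the lex comonad giving the ionad structure on $\mathbb{M}\mathbbm{od}(\cs)$ agrees with the one produced by $\mathbbm{pt}$, and that $\gimel(\cs)$ being a genuine Grothendieck topos guarantees that $\mathbbm{pt}(\gimel(\cs))$ is a bounded ionad to which \cite[Thm. 4.0.3]{thgeo} legitimately applies. Everything downstream of that identification is, exactly as in \Cref{classificatore}, a one-line consequence of the adjunction.
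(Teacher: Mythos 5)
Your proposal follows essentially the same route as the paper: the paper likewise rests on the identification $\mathbb{M}\mathbbm{od} \simeq \mathbbm{pt}\circ\gimel$ (proved as a separate proposition immediately before the theorem, via a density-comonad computation along the dense site inclusion $j\colon S \to \gimel(\cs)$, showing the intrinsically defined interior operator $\text{Int}_\cs$ agrees with the comonad produced by $\mathbbm{pt}$) and then invokes \cite[Thm. 4.0.3]{thgeo} to translate ``enough points'' into the categorified Isbell counit $\mathbb{O}\mathbbm{pt}(\gimel(\cs)) \to \gimel(\cs)$ being an equivalence. The step you correctly flag as the only genuine obstacle is exactly that proposition, and your sketch of what it requires matches the paper's treatment.
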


 We decided to present this theorem separately from the previous one because indeed a ionad of models is a much more complex object to study than a category of models, thus the results of the previous section are indeed very interesting, because easier to handle.

\begin{exa}[Motivating ionads of models: Ultracategories]
We are not completely used to thinking about ionads of models. Indeed a (bounded) ionad is quite complex data, and we do not completely have a logical intuition on its interior operator. \textit{In which sense does the interior operator equip a category of models with a topology?} One very interesting example, that hasn't appeared in the literature to our knowledge is the case of ultracategories. Ultracategories where introduced by Makkai in \cite{AWODEY2013319} and later simplified by Lurie in \cite{lurieultracategories}. These objects are the data of a category $\ca$ together with an ultrastructure, that is a family of functors \[\int_X:\beta(X) \times \ca^X \to \ca. \] We redirect to \cite{lurieultracategories} for the precise definition. In a nutshell, each of these functors $\int_X$ defines a way to compute the ultraproduct of an $X$-indexed family of objects along some ultrafilter. Of course there is a notion of morphism of ultracategories, namely a functor $\ca \to \cb$ which is compatible with the ultrastructure  \cite[Def. 1.41]{lurieultracategories}. Since the category of sets has a natural ultrastructure, for every ultracategory $\ca$ one can define $\text{Ult}(\ca, \Set)$ which obviously sits inside $\Set^\ca$. Lurie observes that the inclusion \[\iota: \text{Ult}(\ca, \Set) \to \Set^\ca\] preserves all colimits \cite[War. 1.4.4]{lurieultracategories}, and in fact also finite limits (the proof is the same). In particular, when $\ca$ is accessible and every ultrafunctor is accessible, the inclusion $\iota: \text{Ult}(\ca, \Set) \to \Set^\ca$ factors through $\P(\ca)$ and thus the ultrastructure over $\ca$ defines a idempotent lex comonad over $\P(\ca)$ by the adjoint functor theorem. This shows that every (good enough) accessible ultracategory yields a ionad, which is also \text{compact} in the sense that its category of opens is a compact (coherent) topos. This example is really a step towards a categorified Stone duality involving compact ionads and boolean topoi.
\end{exa} 

\subsubsection{$\mathsf{LGSketches}$ and $\mathbb{M}\mathbbm{od}(-)$}

\begin{defn}
A geometric sketch $\mathcal{S}$ is lex if its underlying category has finite limits and every limiting cone is in the limit class.
\end{defn}

\begin{rem}[Lex sketches are \textit{enough}]
\cite[4.3.3]{borceux_19943} shows that every geometric sketch can be replaced with a lex geometric sketch in such a way that the underlying category of models, and even the classifying topos, does not change. In this sense this full subcategory of geometric sketches is as expressive as the whole category of geometric sketches.
\end{rem}

\begin{prop}[$\mathbb{M}\mathbbm{od}(-)$ on objects]

Every lex geometric sketch $\mathcal{S}$ induces a ionad $\mathbb{M}\mathbbm{od}(\mathcal{S})$ over its category of models $\mathsf{Mod}(\mathcal{S})$.
\end{prop}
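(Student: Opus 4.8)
The plan is to exhibit $\mathbb{M}\mathbbm{od}(\cs)$ as the ionad of points of the classifying topos of $\cs$, that is, to set $\mathbb{M}\mathbbm{od}(\cs) := \mathbbm{pt}(\gimel(\cs))$ and then to check that this bounded ionad lives over $\mathsf{Mod}(\cs)$. Since $\gimel(\cs)$ is a genuine Grothendieck topos (the classifying topos, by the $\gimel$ construction) and $\mathbbm{pt}\colon \text{Topoi} \to \text{BIon}$ is the right adjoint of the $\mathbb{O} \dashv \mathbbm{pt}$ adjunction of \cite[Sec. 3]{thgeo}, the output is automatically a bounded ionad. It then remains only to identify its underlying category and to make its interior operator explicit, so that the label ``ionad of models'' is justified.

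First I would recall that the underlying category of $\mathbbm{pt}(\gimel(\cs))$ is the category of points $\mathsf{pt}(\gimel(\cs))$, and that by \cite[4.3.6]{borceux_19943} one has $\mathsf{pt}(\gimel(\cs)) \simeq \mathsf{Mod}(\cs)$, so that $\mathbb{M}\mathbbm{od}(\cs)$ is indeed a ionad over $\mathsf{Mod}(\cs)$. To make the interior operator concrete (and to match the ultracategory example), I would consider the evaluation functor
\[
\mathsf{ev}\colon \gimel(\cs) \to \P(\mathsf{Mod}(\cs)), \qquad E \mapsto \bigl(M \mapsto M^*(E)\bigr),
\]
where $M^*$ is the inverse image of the point corresponding to the model $M$. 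Because inverse images are lex and cocontinuous and (co)limits in $\P(\mathsf{Mod}(\cs))$ are computed pointwise, $\mathsf{ev}$ preserves finite limits and all colimits; moreover each copresheaf $M \mapsto M^*(E)$ preserves directed colimits of points, hence is accessible and therefore small, so $\mathsf{ev}$ genuinely lands in $\P(\mathsf{Mod}(\cs))$. A right adjoint $R$ to $\mathsf{ev}$ then exists, and I would define $\text{Int} := \mathsf{ev} \circ R$. This is a comonad by construction, and it is lex since $R$ preserves all limits while $\mathsf{ev}$ preserves finite ones; this is exactly the interior operator of $\mathbbm{pt}(\gimel(\cs))$.

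Finally I would verify the standing size hypotheses in the definition of generalized ionad: that $\mathsf{Mod}(\cs)$ is locally small and pre-finitely cocomplete. Both follow from $\mathsf{Mod}(\cs) \simeq \mathsf{pt}(\gimel(\cs))$ being an accessible category with directed colimits (as recorded in \cite[Cor. 4.3.2]{borceux_19943} and \Cref{pt}), together with the fact that $\gimel(\cs)$ is a Grothendieck topos. The main obstacle, as in the ultracategory example, is entirely a matter of size control: one must confirm that $\mathsf{ev}$ factors through the \emph{small} copresheaves and that the right adjoint $R$ exists, i.e.\ that the comonad is well defined on $\P(\mathsf{Mod}(\cs))$ rather than on some larger functor category. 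This is precisely the issue that the generalized-ionad framework of \cite[Sec. 2]{thgeo} is designed to resolve, and it hinges on the accessibility of the category of points of the classifying topos; once that is in hand, the verification is routine and the construction reduces to the already-established $\mathbbm{pt}$ functor applied to $\gimel(\cs)$.
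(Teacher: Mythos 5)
Your proof is correct, but it takes a genuinely different route from the paper's. You \emph{define} $\mathbb{M}\mathbbm{od}(\cs)$ as $\mathbbm{pt}(\gimel(\cs))$, outsourcing everything to Borceux's classifying-topos construction and to the Isbell adjunction of \cite[Sec.~3]{thgeo}, and then identify the underlying category via $\mathsf{pt}(\gimel(\cs)) \simeq \mathsf{Mod}(\cs)$. The paper instead builds the interior operator intrinsically from the sketch: it takes the mate $\mathsf{ev}\colon S \to \P(\mathsf{Mod}(\cs))$ of the evaluation pairing $(s,p) \mapsto p(s)$, checks that it lands in small copresheaves and preserves finite limits --- this is exactly where the \emph{lex} hypothesis does its work, since models send the specified cones, hence all finite limit cones of $S$, to limits --- and then defines $\text{Int}_\cs := \lan_y \mathsf{ev} \circ \lan_{\mathsf{ev}} y$, the density comonad of $\mathsf{ev}$, whose lexness and right adjoint come from the infinitary-pretopos structure of $\P(\mathsf{Mod}(\cs))$ and the totality of $\Set^{S^\circ}$. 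Your route buys brevity and some generality: lexness of your evaluation functor is free (inverse images are lex), so the lex hypothesis on $\cs$ is never used and the argument would apply to any geometric sketch; moreover boundedness of the ionad is automatic, whereas the paper obtains it only a posteriori. What it costs is the architecture of the section: the paper defines $\mathbb{M}\mathbbm{od}$ without reference to $\gimel$ precisely so that the subsequent proposition, $\mathbbm{pt} \circ \gimel \simeq \mathbb{M}\mathbbm{od}$, is a substantive comparison of two a priori different ionads (proved by exactly the density-comonad computation you bypass), and so that \Cref{isbellclassificatore} reads as a reconstruction of the classifying topos from sketch-intrinsic semantic data. With your definition, that proposition becomes a tautology and the theorem, while still true, is just \cite[Thm.~4.0.3]{thgeo} applied to the topos $\gimel(\cs)$, carrying no additional information about sketches beyond what the categorified Isbell duality already says.
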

\begin{proof}
The underlying category of the ionad $\mathbb{M}\mathbbm{od}(\mathcal{S})$ is $\mathsf{Mod}(\mathcal{S})$. We must provide an interior operator (a lex comonad), $$\text{Int}_{\cs}: \P({\mathsf{Mod}(\mathcal{S})}) \to \P({\mathsf{Mod}(\mathcal{S})}). $$ In order to do so, we consider  the evaluation pairing $\mathsf{eval}: S \times \mathsf{Mod}(\cs) \to \Set$ mapping $(s,p) \mapsto p(s)$. Let $\mathsf{ev}: \cs \to \Set^{\mathsf{Mod}(S)}$ be its mate. Similarly to \cite[Con. 3.2.3]{thgeo}, such functor takes values in $\P(\mathsf{Mod}(S))$. Because $\mathcal{S}$ is a lex sketch, this functor must preserve finite limits. Indeed, \[\mathsf{ev}(\lim s_i)(-) \cong (-)(\lim s_i) \cong \lim ((-)(s_i)) \cong  \lim \mathsf{ev}(s_i)(-).\]
Now, the left Kan extension $\lan_y \mathsf{ev}$ (see diagram below) is left exact because $\P(\mathsf{Mod}(S))$ is an infinitary pretopos and $\mathsf{ev}$ preserves finite limits. 
\begin{center}
\begin{tikzcd}
S \arrow[rr, "\mathsf{ev}" description] \arrow[ddd, "y" description]               &  & \P(\mathsf{Mod}(S)) \\
                                                                                     &  &                          \\
                                                                                     &  &                          \\
\Set^{S^\circ} \arrow[rruuu, "\lan_y \mathsf{ev}" description, dashed, bend right] &  &                         
\end{tikzcd}
\end{center}
Moreover it is cocontinuous because of the universal property of the presheaf construction. Because $\Set^{S^\circ}$ is a total category,  $\lan_y \mathsf{ev}$ must have a right adjoint (and it must coincide with $\lan_{\mathsf{ev}} y$). The induced comonad must be left exact, because the left adjoint is left exact. Define \[\text{Int}_{\cs}:=\lan_y \mathsf{ev} \circ \lan_{\mathsf{ev}} y. \]
Observe that $\text{Int}_{\cs}$ coincides with the density comonad of $\mathsf{ev}$ by \cite[A.7]{liberti2019codensity}. Such result dates back to \cite{appelgate1969categories}.
\end{proof} 

\begin{rem}[$\mathbb{M}\mathbbm{od}(-)$ on morphism of sketches]
This definition will not be given explicitly: in fact we will use the following remark to show that the ionad above is isomorphic to the one induced by $\gimel(\cs)$, and thus there exists a natural way to define $\mathbb{M}\mathbbm{od}(-)$ on morphisms. 
\end{rem}
 
\subsubsection{Ionads of models and theories with enough points}

\begin{rem}
In the main result of the previous section, a relevant rôle was played by the fact that $\mathsf{pt}\gimel \simeq \mathsf{Mod}.$ The same must be true in this one. Thus we should show that $\mathbbm{pt}\gimel \simeq \mathbb{M}\mathbbm{od}.$ Indeed we only need to show that the interior operator is the same, because the underlying category is the same by the discussion in the previous section. 
\end{rem}

\begin{prop}
\[\mathbbm{pt} \circ \gimel \simeq \mathbb{M}\mathbbm{od}.\]
\end{prop}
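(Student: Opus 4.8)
The plan is to reduce the statement, exactly as the preceding remark suggests, to an identification of interior operators, since the underlying categories already agree: by \citep[4.3.6]{borceux_19943} we have $\mathsf{pt}\gimel(\cs) \simeq \mathsf{Mod}(\cs)$, and $\mathsf{Mod}(\cs)$ is by construction the base category of $\mathbb{M}\mathbbm{od}(\cs)$. First I would recall that, exactly as in the construction of $\mathbb{M}\mathbbm{od}(-)$ above (and following \citep[Con. 3.2.3]{thgeo}), the interior operator of $\mathbbm{pt}(\ce)$ for a topos $\ce$ is the density comonad $\lan_{\mathsf{ev}_\ce}\mathsf{ev}_\ce$ of the evaluation functor $\mathsf{ev}_\ce \colon \ce \to \P(\mathsf{pt}(\ce))$, $E \mapsto (p \mapsto p^*E)$, which is lex because each $p^*$ is and limits in $\P$ are computed pointwise. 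Thus both interior operators are density comonads of evaluation functors, and the whole task is to compare them.

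Because $\cs$ is a \emph{lex} geometric sketch, its underlying category $S$ is already cartesian, so the cartesian completion of $\citep[4.3.3]{borceux_19943}$ may be taken to be $S$ itself and $\gimel(\cs) \simeq \mathsf{Sh}(S, \bar J)$. Let $\ell \colon S \to \gimel(\cs)$ be the sheafified Yoneda embedding. The second step is to check that restriction along $\ell$ carries $\mathsf{ev}_{\gimel(\cs)}$ back to the functor $\mathsf{ev}$ of the $\mathbb{M}\mathbbm{od}$-construction: under the identification $p \leftrightarrow M := p^*\ell$ of points with models, one has $\mathsf{ev}_{\gimel(\cs)}(\ell(s))(p) = p^*(\ell(s)) = M(s) = \mathsf{ev}(s)(M)$, so that $\mathsf{ev}_{\gimel(\cs)} \circ \ell \cong \mathsf{ev}$.

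The heart of the argument is then a formal manipulation of left Kan extensions. I would use two facts. First, $\ell$ is dense: the nerve $\gimel(\cs) \to \Set^{S^\circ}$, $F \mapsto \gimel(\cs)(\ell(-), F)$, is the fully faithful sheaf inclusion, so that $\mathsf{ev}_{\gimel(\cs)} \cong \lan_\ell(\mathsf{ev}_{\gimel(\cs)} \circ \ell) = \lan_\ell \mathsf{ev}$, using that $\mathsf{ev}_{\gimel(\cs)}$ is cocontinuous (each $p^*$ is a left adjoint). Second, the composition law $\lan_{G\ell} H \cong \lan_G \lan_\ell H$ for pointwise left Kan extensions. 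Combining these with $H = \mathsf{ev} = \mathsf{ev}_{\gimel(\cs)}\circ\ell$ gives
\[
\lan_{\mathsf{ev}}\mathsf{ev} = \lan_{\mathsf{ev}_{\gimel(\cs)}\circ \ell}\bigl(\mathsf{ev}_{\gimel(\cs)}\circ \ell\bigr) \cong \lan_{\mathsf{ev}_{\gimel(\cs)}}\bigl(\lan_\ell \mathsf{ev}\bigr) \cong \lan_{\mathsf{ev}_{\gimel(\cs)}}\mathsf{ev}_{\gimel(\cs)},
\]
which is exactly the equality of the two density comonads, hence of the two interior operators.

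I expect the main obstacle to be bookkeeping rather than conceptual: one must ensure the isomorphisms above are isomorphisms of \emph{comonads}, not merely of endofunctors, which amounts to tracking the canonical (co)unit $2$-cells through the density argument; and one must keep the size constraints of the generalized-ionad setting under control, namely that $p \mapsto p^*E$ lands in small copresheaves $\P(\mathsf{pt}(\gimel(\cs)))$, which is where accessibility of $\gimel(\cs)$ and boundedness of the ionad enter. Finally, to promote the objectwise equivalence $\mathbbm{pt}\gimel(\cs) \simeq \mathbb{M}\mathbbm{od}(\cs)$ to the claimed $\mathbbm{pt}\circ\gimel \simeq \mathbb{M}\mathbbm{od}$, I would observe that every step is natural in $\cs$, so the comparison assembles into a pseudonatural equivalence; this also retroactively supplies the action of $\mathbb{M}\mathbbm{od}(-)$ on morphisms anticipated in the remark preceding the $\mathbb{M}\mathbbm{od}$-construction.
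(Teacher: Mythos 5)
Your proposal is correct and is essentially the paper's own argument: both identify the two interior operators as density comonads of evaluation functors, use that the sketch $S$ maps densely into $\gimel(\cs)$ with $\mathsf{ev}_{\gimel(\cs)}$ restricting along this map to $\mathsf{ev}$, and then apply the composition law for pointwise left Kan extensions to identify the density comonads (you merely run the chain of isomorphisms in the opposite direction from the paper). Your closing remarks on comonad-level coherence, size, and pseudonaturality in $\cs$ are sensible bookkeeping that the paper leaves implicit.
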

\begin{proof}

Let $\cs$ be a lex geometric sketch. Of course there is a map $j: S \to \gimel{\cs}$, because $S$ is a site of definition of $\gimel{\cs}$. Moreover, $j$ is obviously dense. In particular the evaluation functor that defines the ionad $\mathbbm{pt} \circ \gimel$ given by $ev^*: \gimel(\cs) \to \P({\mathsf{pt} \circ \gimel(\cs)})$ is uniquely determined by its composition with $j$. This means that the comonad $ev^*ev_*$ is isomorphic to the density comonad of the composition $ev^* \circ j$. Indeed, \[ev^*ev_* \cong \lan_{ev^*} ev^* \cong \lan_{ev^*} (\lan_j(ev^*j)) \cong \lan_{ev^*j}(ev^*j).\] Yet, $ev^*j$ is evidently $\mathsf{ev}$, and thus $ev^* ev_* \cong \text{Int}_\cs$ as desired.

\end{proof}

 \begin{thm} \label{isbellclassificatore} The following are equivalent:
 \begin{itemize} 
  \item $\gimel(\cs)$ has enough points;
  \item $\gimel(\cs)$ coincides with $\mathbb{O}\mathbb{M}\mathbbm{od}(\cs)$.
 \end{itemize}
 \end{thm}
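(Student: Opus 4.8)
The plan is to deduce this from the categorified Isbell duality of \cite{thgeo}, exactly as \Cref{classificatore} was deduced from the Scott adjunction. Recall that the adjunction $\mathbb{O} \dashv \mathbbm{pt}$ categorifies the adjunction between topological spaces and locales, and that under the latter a locale is spatial --- i.e.\ has enough points --- precisely when the counit $\mathcal{O}\mathsf{pt}(L) \to L$ is an isomorphism. The corresponding statement one level up, established in \cite{thgeo}, is that a topos $\ce$ has enough points if and only if the counit $\epsilon_\ce : \mathbb{O}\mathbbm{pt}(\ce) \to \ce$ of the Isbell adjunction is an equivalence of categories. I would take this spatiality characterization as the engine of the proof.

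The key computation is then to identify $\mathbb{O}\mathbb{M}\mathbbm{od}(\cs)$ with $\mathbb{O}\mathbbm{pt}\gimel(\cs)$. This is immediate from the proposition just proved, $\mathbbm{pt}\circ\gimel \simeq \mathbb{M}\mathbbm{od}$: applying the $2$-functor $\mathbb{O}$ to the natural equivalence $\mathbb{M}\mathbbm{od}(\cs) \simeq \mathbbm{pt}\gimel(\cs)$ yields
\[\mathbb{O}\mathbb{M}\mathbbm{od}(\cs) \simeq \mathbb{O}\mathbbm{pt}\gimel(\cs).\]
Under this identification the comparison map $\gimel(\cs) \to \mathbb{O}\mathbb{M}\mathbbm{od}(\cs)$ is precisely the component $\epsilon_{\gimel(\cs)}$ of the counit at the topos $\gimel(\cs)$. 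Chaining the two facts produces the desired equivalences: $\gimel(\cs)$ has enough points $\iff$ $\epsilon_{\gimel(\cs)}$ is an equivalence $\iff$ $\mathbb{O}\mathbbm{pt}\gimel(\cs) \simeq \gimel(\cs)$ $\iff$ $\mathbb{O}\mathbb{M}\mathbbm{od}(\cs) \simeq \gimel(\cs)$.

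The only genuinely delicate point --- and hence where I would spend care --- is verifying that the comparison $\gimel(\cs) \to \mathbb{O}\mathbb{M}\mathbbm{od}(\cs)$ coincides, through the equivalence $\mathbbm{pt}\gimel \simeq \mathbb{M}\mathbbm{od}$, with the counit of $\mathbb{O}\dashv\mathbbm{pt}$. Concretely this is the coherence check that the equivalence of the preceding proposition is compatible with the unit--counit data of the adjunction, so that transporting along it carries $\epsilon_{\gimel(\cs)}$ to the canonical map into the topos of opens of the ionad of models. Once this compatibility is in place the result is formal: both the spatiality characterization and the computation of $\mathbb{O}\mathbb{M}\mathbbm{od}$ are imported wholesale from \cite{thgeo} and the previous proposition, so the naturality verification is the crux of the argument.
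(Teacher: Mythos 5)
Your proposal is correct and follows essentially the same route as the paper's own proof: invoke the spatiality characterization of \cite[Thm.\ 4.0.3]{thgeo} (enough points $\iff$ the Isbell counit is an equivalence) and then transport along the equivalence $\mathbbm{pt}\circ\gimel \simeq \mathbb{M}\mathbbm{od}$ established in the preceding proposition. In fact you are more scrupulous than the paper, which silently identifies the comparison map with the counit and does not spell out the coherence check you flag as the crux.
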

 \begin{proof}
 By \cite[Thm. 4.0.3]{thgeo}, $\gimel(S)$ has enough points if and only if the counit of the categorified Isbell duality   $\rho: \mathbb{O}\mathbbm{pt}(\gimel)(\cs) \to \cs$ is an equivalence of topoi. Now, since $\mathbbm{pt} \circ \gimel \cong \mathbb{M}\mathbbm{od}$, we obtain the thesis.
 \end{proof}

 \section{Abstract elementary classes and locally decidable topoi} \label{logicaec}

\subsection{A general discussion}
This section is dedicated to the interaction between Abstract elementary classes and the Scott adjunction. Abstract elementary classes were introduced in the 70's by Shelah as a framework to encompass infinitary logics within the language of model theorist. In principle, an abstract elementary class $\ca$ should look like the category of models of a first order infinitary theory whose morphisms are elementary embeddings. The problem of relating abstract elementary classes and accessible categories has been tackled by Lieberman \citep{L2011}, and Beke and Rosický \cite{aec}, and lately has attracted the interest of model theorists such as Vasey, Boney and Grossberg \citep{everybody}.  There are many partial, even very convincing results, in this characterization. Let us recall at least one of them. For us, this characterization will be the definition of abstract elementary class.

 \begin{thm}[{\citep[5.7]{aec}}]\label{AEC}  A category $\ca$ is equivalent to an abstract elementary class if and only if it is an accessible category with directed colimits, whose morphisms are monomorphisms and which admits a full with respect to isomorphisms and nearly full embedding $U$ into a finitely accessible category preserving directed colimits and monomorphisms.
 \end{thm}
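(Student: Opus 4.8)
The plan is to prove the two implications separately, in each direction translating between the Shelah-style axioms of an abstract elementary class and the categorical hypotheses on $\ca$ and $U$. Throughout I take an AEC to be a class $\mathcal{K}$ of $\Sigma$-structures equipped with a strong-substructure relation $\prec$ refining substructure and satisfying isomorphism-invariance, coherence, the Löwenheim–Skolem–Tarski (LST) axiom and the Tarski–Vaught chain axioms; its associated category has the members of $\mathcal{K}$ as objects and the strong embeddings as morphisms. The two distinguished conditions on $U$ — \emph{full with respect to isomorphisms} (every iso $UA \to UB$ lifts to an iso $A \to B$) and \emph{nearly full} (every $h\colon UB \to UC$ with $h\circ Uf = Ug$, for $f\colon A \to B$ and $g\colon A \to C$ in $\ca$, is of the form $U\bar h$) — are built precisely to encode isomorphism-invariance and the coherence axiom.

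For the direction AEC $\Rightarrow$ categorical conditions I would take $\cb = \Emb(\Sigma)$, the category of $\Sigma$-structures and $\Sigma$-embeddings, which is finitely accessible with the finitely generated structures as its finitely presentable objects, and let $U$ be the forgetful functor. Each clause then reduces to a single AEC axiom: strong embeddings are injective, hence monomorphisms; the chain axioms say exactly that $\ca$ has directed colimits computed as unions of underlying structures, i.e. that $U$ preserves them; the LST axiom supplies a cardinal $\lambda$ for which every object is a directed union of strong substructures of size below $\lambda$, of which there is only a set up to isomorphism, so $\ca$ is accessible; isomorphism-invariance gives fullness with respect to isomorphisms; and coherence is precisely near fullness.

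For the converse I would first make the abstract category $\cb$ concrete. Since $\cb \simeq \Ind(\cb_\omega)$, the restricted-Yoneda (nerve) embedding $E\colon \cb \to \Set^{\cb_\omega}$, $b \mapsto \cb(-,b)|_{\cb_\omega}$, is fully faithful, preserves directed colimits, and sends monomorphisms to pointwise injections; reading $\Set^{\cb_\omega}$ as many-sorted sets, $E$ realizes each object as a structure in a many-sorted signature $\Sigma$ and, on monomorphisms, lands in $\Emb(\Sigma)$ (\cite{adamekrosicky94}). Composing, $V = E\circ U\colon \ca \to \Emb(\Sigma)$ is faithful, preserves directed colimits and monomorphisms, and inherits full-w.r.t.-isos and near fullness. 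I then let $\mathcal{K}$ be the structures isomorphic to some $V(a)$ and declare $M \prec M'$ exactly when the inclusion of underlying structures is, under this identification, of the form $V(k)$ for a morphism $k$ of $\ca$. The AEC axioms follow in turn: isomorphism-invariance and the refinement of substructure from the construction together with iso-fullness; coherence from near fullness; the chain axioms from preservation of directed colimits; and the LST number from the presentability rank of $\ca$, via $V$ sending $\lambda$-presentable objects to structures of bounded cardinality.

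The hard part will be this backward direction, and in particular two linked points: ensuring the concrete realization $E$ is faithful enough (monomorphisms becoming genuine embeddings, directed colimits becoming unions), and, above all, extracting a genuine Löwenheim–Skolem number. The latter is the crux. Given an element of $V(a)$ one must produce a \emph{small} substructure of $V(a)$ that contains it, is strong (of the form $V(k)$), and is again an object of $\ca$; this forces a downward factorization of $a$ through $\lambda$-presentable objects of $\ca$, and near fullness is used exactly to certify that the small pieces so produced sit strongly inside $V(a)$. Making a single cardinal $\lambda$ work uniformly over all of $\ca$ — turning the abstract presentability rank into a true Löwenheim–Skolem bound — is where the genuine effort concentrates.
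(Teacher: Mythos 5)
This statement is never proved in the paper: it is quoted from Beke--Rosick\'y \cite{aec} (their Theorem 5.7) and explicitly adopted here as the \emph{definition} of an abstract elementary class, so there is no internal proof to compare your attempt against; the only meaningful comparison is with the original argument of \cite{aec}. Your outline does reproduce the architecture of that argument: the forward direction by viewing an AEC inside $\Emb(\Sigma)$ with the forgetful functor, matching axioms to hypotheses one by one; the backward direction by composing $U$ with the nerve (restricted Yoneda) embedding $\cb \simeq \Ind(\cb_\omega) \to \Set^{\cb_\omega}$, reading presheaves as many-sorted structures, taking the isomorphism closure of the image, defining the strong-substructure relation as ``inclusion in the image of a morphism of $\ca$'', and translating iso-fullness into isomorphism-invariance and near-fullness into coherence. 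That dictionary is the right one and is exactly the one the cited proof uses.

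However, as a proof the proposal has a genuine gap, and it is precisely the one you flag at the end: extracting a uniform L\"owenheim--Skolem number. Knowing that $a$ is $\lambda$-presentable in $\ca$ says nothing, by itself, about the cardinality of the structure $V(a)$, and knowing that every object of $\ca$ is a $\lambda$-directed colimit of $\lambda$-presentable objects does not yet produce, inside a fixed $V(a)$, small \emph{strong} substructures containing a prescribed small subset. What is needed is the uniformization (index-raising) theorem for accessible categories and functors: there exist arbitrarily large regular cardinals $\mu$ such that $\ca$ and $\cb$ are both $\mu$-accessible, $U$ preserves $\mu$-directed colimits \emph{and} $\mu$-presentable objects (see \cite[Thm. 2.19]{adamekrosicky94} and the surrounding theory of the sharp inequality on cardinals, going back to Makkai--Par\'e). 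Only with such a $\mu$ in hand do the $\mu$-presentable subobjects of $a$ map to substructures of $V(a)$ of cardinality below $\mu$ that are cofinal among small subsets and are strong by construction, which is what the LST and smoothness axioms require. This is a substantial set-theoretic/combinatorial theorem, not a routine verification, and your text acknowledges it without supplying it; the same device is also what makes your forward direction's ``only a set of small models up to isomorphism'' and the identification of presentables with small models precise. So the proposal is a correct plan that follows the known route, but the decisive step is asserted rather than proved.
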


 \begin{defn} 
 A functor $\mathsf{U} : \ca \to \cb$ is nearly full if, given a commutative diagram,
 \begin{center}
 \begin{tikzcd}
\mathsf{U}(a) \arrow[rrd, "\mathsf{U}(f)" description] \arrow[dd, "h" description] &  &  \\
 &  & \mathsf{U}(c) \\
\mathsf{U}(b) \arrow[rru, "\mathsf{U}(g)" description] &  & 
\end{tikzcd}
 \end{center}
 in $\cb$, there is a map $\bar{h}$ in $\ca$ such that $h = \mathsf{U}(\bar{h})$ and $g\bar{h}=f$. Observe that when $\mathsf{U}$ is faithful such a filling has to be unique.
 \end{defn}
 
 \begin{rem}
In some reference the notion of nearly-full functor was called coherent, referring directly to the \textit{coherence axiom} of AECs that it incarnates. The word coherent is overloaded in category theory, and thus we do not adopt this terminology, but nowadays it is getting more and more common. 
\end{rem}

 \begin{exa}[$\mathsf{pt}(\ce)$ is likely to be an AEC]\label{esempiobase}
 Let $\ce$ be a Grothendieck topos and $f^*: \Set^C \leftrightarrows \ce :f_*$ a presentation of $\ce$. By a combination of \cite[\text{Prop.} 4.2]{thcat} and \cite[Rem. 2.12]{thcat}, applying the functor $\mathsf{pt}$ we get a fully faithful functor  \[\mathsf{pt}(\ce) \stackrel{}{\to} \mathsf{pt}(\Set^C) \stackrel{}{\cong} \mathsf{Ind}(C) \]
 into a finitely accessibly category. Thus when every map in $\mathsf{pt}(\ce)$ is a monomorphism we obtain that $\mathsf{pt}(\ce)$ is an AEC via Thm. \ref{AEC}. We will see in the next section (Thm. \ref{LDCAECs}) that this happens when $\ce$ is locally decidable; thus the category of points of a locally decidable topos is always an AEC.
 \end{exa}

 \begin{exa}[$\eta_\ca$ behaves nicely on AECs]
When $\ca$ is an abstract elementary class, the unit of the Scott adjunction $\eta_\ca: \ca \to \mathsf{pt}\mathsf{S}(\ca)$ is faithful and iso-full. This follows directly from \cite[Prop 4.13]{thcat}.
 \end{exa}

 \begin{rem}
 Even if this is the sharpest (available)  categorical characterization of AECs it is not hard to see how unsatisfactory it is. Among the most evident problems, one can see that it is hard to provide a categorical \textit{understanding} of nearly full and full with respect to isomorphisms. Of course, an other problem is that the list of requirements is pretty long and very hard to check: \textit{when does such a $U$ exist?}
 \end{rem}

It is very hard to understand when such a pseudo monomorphism exists. That is why it is very useful to have a testing lemma for its existence.

\begin{thm}[Testing lemma]
Let $\ca$ be an object in $\text{Acc}_{\omega}$ where every morphism is a monomorphism. If $\eta_\ca$ is a nearly-full pseudo monomorphism, then $\ca$ is an AEC.
\end{thm}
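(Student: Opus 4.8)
The plan is to verify directly the characterization of \Cref{AEC}. Since $\ca$ is already an accessible category with directed colimits all of whose morphisms are monomorphisms, the only missing datum is a finitely accessible category together with a suitable functor into it. The natural candidate is obtained by composing the unit of the Scott adjunction with the comparison functor of \Cref{esempiobase}. Concretely, the Scott topos $\mathsf{S}(\ca)$ is a Grothendieck topos, so it admits a presentation $f^* : \Set^{C} \leftrightarrows \mathsf{S}(\ca) : f_*$; by \Cref{esempiobase} the induced functor $\mathsf{pt}(f)$ is a fully faithful embedding $V : \mathsf{pt}\mathsf{S}(\ca) \to \mathsf{Ind}(C)$ into a finitely accessible category. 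I would then set $U := V \circ \eta_\ca : \ca \to \mathsf{Ind}(C)$ and prove that $U$ satisfies every clause of \Cref{AEC}.

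Next I would dispatch the properties of $U$ that are formal consequences of the hypotheses. Faithfulness of $U$ holds because a pseudomonomorphism is in particular faithful and $V$ is fully faithful, so the composite is faithful; fullness with respect to isomorphisms is inherited in the same way, since both $\eta_\ca$ (a pseudomonomorphism) and $V$ (fully faithful) are iso-full and iso-fullness is stable under composition. Preservation of directed colimits holds because $\eta_\ca$ is a $1$-cell of $\text{Acc}_{\omega}$ and $V = \mathsf{pt}(f)$ preserves directed colimits by \Cref{pt}. The only genuinely combinatorial point is near-fullness, which I would isolate as a small lemma: if $F$ is nearly full and $G$ is fully faithful, then $G \circ F$ is nearly full. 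Indeed, given a commutative triangle $GF(g)\circ h = GF(f)$ in the target, fullness of $G$ writes $h = G(h')$, faithfulness of $G$ yields $F(g)\circ h' = F(f)$, near-fullness of $F$ produces $\bar h$ with $F(\bar h) = h'$ and $g\bar h = f$, and then $GF(\bar h) = G(h') = h$. Applying this with $F = \eta_\ca$ and $G = V$ gives near-fullness of $U$.

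The main obstacle is the remaining clause: that $U$ preserves monomorphisms, equivalently (since every morphism of $\ca$ is monic) that $U$ carries \emph{every} morphism of $\ca$ to a monomorphism of $\mathsf{Ind}(C)$. This does not follow formally from faithfulness, iso-fullness and near-fullness, because these three properties only constrain $U$ on arrows whose sources and targets lie in its image, whereas a monomorphism in $\mathsf{Ind}(C)$ must be detected by maps out of \emph{arbitrary} finitely presentable objects, which need not be in the image of $U$. My plan here is to reduce the problem to the site: monomorphisms in a finitely accessible category are detected on its finitely presentable generators, and, since $U$ preserves directed colimits, the components of $U(m)$ are computed by evaluating the generating objects $f^*(\mathbf{y}c)$ of the presentation, regarded as directed-colimit-preserving functors $\ca \to \Set$, on $m$. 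Thus it suffices to arrange the presentation so that each such generating object carries $\ca$-morphisms to injections; securing this is exactly where the hypothesis that every morphism of $\ca$ is a monomorphism must be consumed, and it is handled by the presentation machinery of \cite{thcat} underlying \Cref{esempiobase}. I expect this to be the delicate and least formal part of the argument. Once mono-preservation is in place, every clause of \Cref{AEC} is satisfied by $U$, and $\ca$ is an abstract elementary class.
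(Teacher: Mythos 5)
Your proposal is essentially the paper's own proof: the paper forms the very same composite $\ca \xrightarrow{\eta_\ca} \mathsf{pt}\mathsf{S}(\ca) \to \mathsf{pt}(\Set^{C}) \cong \mathsf{Ind}(C)$ out of a presentation of the Scott topos and feeds it into the characterization of \Cref{AEC}, invoking \cite{thcat} for the properties of the second leg, exactly as you do via \Cref{esempiobase} and \Cref{pt}. Your explicit verifications (faithfulness and iso-fullness of a composite of a pseudomonomorphism with a fully faithful functor, and the little lemma that a nearly full functor followed by a fully faithful one is nearly full) are correct and merely spell out what the paper compresses into its citations.

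The one substantive remark concerns the clause you flag as delicate. Your worry about preservation of monomorphisms is legitimate, and it is worth noting that the paper's own proof is silent on it: it asserts only that the composite is ``faithful and nearly full\ldots preserving directed colimits'' and never addresses the monomorphism-preservation requirement of \Cref{AEC}. So your proposal does not fall short of the paper's standard; if anything it is more careful, and your reduction is the right way to discharge the clause. Indeed, monomorphisms in $\mathsf{Ind}(C)$ are detected by maps out of the representables (the finitely presentable generators), and for the composite $U$ one computes $\mathrm{Hom}_{\mathsf{Ind}(C)}(yc, U(m)) \cong [f^*(yc)](m)$, so $U$ preserves monomorphisms precisely when each generator $f^*(yc)$, regarded as a directed-colimit-preserving functor $\ca \to \Set$, carries the (monic) morphisms of $\ca$ to injections. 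This is a property of the chosen presentation, not a formal consequence of faithfulness, iso-fullness or near-fullness; it has to be extracted from the presentation machinery of \cite{thcat} (for instance for the canonical site $\ca_\kappa^{\circ}$ used in the proof of \Cref{LDCAECs}), and neither your sketch nor the paper's proof actually carries out that verification.
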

\begin{proof}
The proof is relatively easy, choose a presentation $f^*: \Set^C \leftrightarrows \mathsf{S}(\ca): f_*$ of $\mathsf{S}(\ca)$. Now in

\[\ca \stackrel{\eta_\ca}{\to} \textsf{pt}\textsf{S}(\ca) \to \textsf{pt}(\Set^{C}) {\cong} \mathsf{Ind}(C),\]

by a combination of \cite[\text{Prop.} 4.2]{thcat} and \cite[Rem. 2.12]{thcat}, the composition is a faithful and nearly full functor preserving directed colimits from an accessible category to a finitely accessible category, and thus $\ca$ is an AEC because of Thm. \ref{AEC}.
\end{proof}

\subsection{Locally decidable topoi and AECs}

The main result of this subsection relates locally decidable topoi to AECs. The full subcategory of $\text{Acc}_\omega$ whose objects are AECs will be indicated by $\text{AECs}$. As in the previous sections, let us give the precise statement and then discuss it in better detail.

\begin{thm}\label{LDCAECs} The Scott adjunction restricts to locally decidable topoi and AECs.

\[\mathsf{S}: \text{AECs} \leftrightarrows \text{LDTopoi}: \mathsf{pt}\]
\end{thm}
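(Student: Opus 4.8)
The plan is to show that the Scott adjunction $\mathsf{S} \dashv \mathsf{pt}$ of \Cref{scottadj} carries AECs to locally decidable topoi and carries locally decidable topoi back to AECs, so that it restricts to the claimed sub-$2$-categories. Since an adjunction restricts to full sub-$2$-categories as soon as each functor lands in the appropriate target, there are two containments to verify: $\mathsf{pt}(\text{LDTopoi}) \subseteq \text{AECs}$ and $\mathsf{S}(\text{AECs}) \subseteq \text{LDTopoi}$.

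For the direction $\mathsf{pt}(\text{LDTopoi}) \subseteq \text{AECs}$, I would start from a locally decidable topos $\ce$ and a presentation $f^*: \Set^C \leftrightarrows \ce : f_*$, exactly as in \Cref{esempiobase}. The key point to extract from local decidability is that every map in $\mathsf{pt}(\ce)$ is a monomorphism; the natural route is that in a locally decidable topos the diagonal of every object is complemented, and a point $p^*: \ce \to \Set$ preserves finite limits and complements, so the induced maps between points are componentwise injective. Granting this, \Cref{esempiobase} already assembles the rest: $\mathsf{pt}(\ce)$ is accessible with directed colimits, embeds fully faithfully (hence fully with respect to isomorphisms and, by the argument of \cite{thcat}, nearly fully) into the finitely accessible category $\mathsf{Ind}(C)$ preserving directed colimits and monomorphisms. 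By the characterization in \Cref{AEC}, $\mathsf{pt}(\ce)$ is then an AEC.

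For the converse direction $\mathsf{S}(\text{AECs}) \subseteq \text{LDTopoi}$, I would take an AEC $\ca$ and try to show $\mathsf{S}(\ca)$ is locally decidable. The natural strategy is to use a site presentation of $\mathsf{S}(\ca)$ coming from a faithful directed-colimit-preserving embedding $U : \ca \to \mathsf{Ind}(C)$ into a finitely accessible category (supplied by \Cref{AEC}), and to exploit that every morphism of $\ca$ is a monomorphism to witness that the generating objects of $\mathsf{S}(\ca)$ are decidable, i.e.\ carry a complemented diagonal. Concretely, I expect that the objects $\ca(a,-) \in \mathsf{S}(\ca)$ for $a$ presentable separate points, and that the mono-condition on $\ca$ forces each such representable to be decidable, so that $\mathsf{S}(\ca)$ is generated by decidable objects and is therefore locally decidable by the criterion in \cite[C5.4]{elephant2}.

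The hard part will be this second containment: turning the purely arrow-theoretic hypothesis \emph{every morphism of $\ca$ is a monomorphism} into the topos-theoretic conclusion that $\mathsf{S}(\ca)$ is \emph{locally decidable}. Checking decidability requires controlling the diagonal and its complement for a generating family inside $\mathsf{S}(\ca) = \text{Acc}_\omega(\ca,\Set)$, and the subtlety is that finite limits in $\mathsf{S}(\ca)$ are computed pointwise in $\Set^\ca$ while complementation is not automatically inherited; one must verify that the complement of the diagonal, formed pointwise, again preserves directed colimits and so lives in $\mathsf{S}(\ca)$. I would isolate this as the technical lemma and prove it by combining the pointwise computation of finite limits (already used in the Generalized axiomatizations remark) with the fact that injectivity of transition maps in a directed system of sets passes to the colimit, which is precisely where the monomorphism hypothesis on $\ca$ is consumed.
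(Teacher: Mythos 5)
Your overall skeleton agrees with the paper's: both directions reduce to two containments, and the points direction is reduced to showing that every map in $\mathsf{pt}(\ce)$ is a monomorphism, after which \Cref{esempiobase} and \Cref{AEC} finish. However, both of your containment arguments contain genuine gaps.

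In the points direction, your key claim that \emph{in a locally decidable topos the diagonal of every object is complemented} is false: that is the statement that every object is decidable, whereas local decidability only provides, for each object, an epimorphism onto it from some decidable object. Your intended conclusion is also false: maps of points of a locally decidable topos need not be componentwise injective. For example, $\mathsf{Sh}(S)$ for $S$ the Sierpi\'nski space is localic over the Boolean topos $\Set$, hence locally decidable, yet the specialization morphism between its two points is given on a sheaf $F$ by the restriction $F(S) \to F(\{1\})$, which can fail to be injective (take $F(S)$ a two-element set and $F(\{1\})$ a singleton). What is true is only that every map of points is a monomorphism in the categorical sense, and your complementation argument does prove injectivity of $\alpha_e$ when $e$ is decidable; to conclude, one still needs the extra step that every object is covered by a decidable one and inverse images preserve epis, so that maps of points agreeing on decidable objects agree everywhere. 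The paper takes a different route entirely: $\ce$ admits a localic geometric morphism into a Boolean topos $\cb$; every map in $\mathsf{pt}(\cb)$ is a monomorphism by \cite[D1.2.10]{elephant2}; $\mathsf{pt}$ of a localic morphism is faithful by \cite[Prop. 4.4]{thcat}; and faithful functors reflect monomorphisms.

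In the Scott direction the gap is more serious: the generating family you propose need not exist. A hom-functor $\ca(a,-)$ lies in $\mathsf{S}(\ca) = \text{Acc}_\omega(\ca,\Set)$ only when $a$ is \emph{finitely} presentable, and an AEC can have no finitely presentable objects at all --- the paper's own example $\Set_{\geq\kappa}$ (sets of cardinality at least $\kappa$ and injections, from Section \ref{logicsaturatedobjects}) is such a category. So there is nothing for your decidability computation to apply to, and no reason the objects you name should generate. (Your pointwise-complement lemma is sound in spirit, but note the monomorphism hypothesis is consumed earlier than you say: it is needed for the off-diagonal part to be a subfunctor at all, since postcomposition with a non-mono can collapse a pair $f \neq g$.) The paper's proof avoids diagonals altogether: by \cite[Rem. 2.9]{thcat} the category $\ca_\kappa^{\circ}$ is a site of definition of $\mathsf{S}(\ca)$; since every map of $\ca$ is a monomorphism, every map of this site is an epimorphism; and \cite[C5.4.4]{elephant2} characterizes locally decidable topoi as precisely those admitting a site in which every map is epic. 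That two-line argument uses the monomorphism hypothesis exactly where you wanted to use it, but without needing to exhibit generators or complement anything.
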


\subsubsection{Locally decidable topoi}

The definition of locally decidable topos will appear obscure at first sight.

\begin{defn}[Decidable object]
An object $e$ in a topos $\ce$ is decidable if the diagonal map $e \to e \times e$ is a complemented subobject.
\end{defn}

\begin{defn}[Locally decidable topos]
An object $e$ in a topos $\ce$ is called locally decidable iff there is an epimorphism $e' \twoheadrightarrow e$ such that $e'$ is a decidable object.  $\ce$ is locally decidable if every object is locally decidable.
\end{defn}

In order to make the definition above clear we should really define decidable objects and discuss their meaning. This is carried out in the literature and it is not our intention to recall the whole theory of locally decidable topoi. Let us instead give the following characterization, that we may take as a definition.

\begin{thm}[{\citep[C5.4.4]{elephant2}}, Characterization of loc. dec. topoi] The following are equivalent:
\begin{enumerate}
  \item $\ce$ is locally decidable;
  \item there exists a site $(C,J)$ of presentation where every map is epic;
  \item there exists a localic geometric morphism into a Boolean topos.
\end{enumerate}
\end{thm}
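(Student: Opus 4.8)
The plan is to prove the three conditions equivalent by a cycle, anchoring it on the two implications that are essentially formal, namely $(2)\Rightarrow(1)$ and $(3)\Rightarrow(1)$, and then producing the genuinely constructive directions $(1)\Rightarrow(3)\Rightarrow(2)$ out of local decidability. The bookkeeping facts I would isolate first are these: decidable objects are closed under subobjects and under finite limits; a lex, coproduct-preserving functor preserves decidability (it preserves the splitting $e\times e\cong\Delta_e\sqcup A$ of a complemented diagonal); and in an extensive topos a coproduct of decidable objects is again decidable. Both easy directions rest on these.

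For $(2)\Rightarrow(1)$ I would argue in $\Set^{C^{\op}}$ that the complement of the diagonal of a representable $\mathbf{y}(c)$ is the ``apartness'' subpresheaf $A(d)=\{(f,g)\in C(d,c)^2: f\neq g\}$, and that $A$ is closed under precomposition \emph{precisely} because every arrow of $C$ is epic (so $f\neq g$ forces $fh\neq gh$); hence representables are decidable. Sheafification $a\colon\Set^{C^{\op}}\to\mathsf{Sh}(C,J)\simeq\ce$ is lex and cocontinuous, so it preserves $c\times c\cong\Delta\sqcup A$ and keeps each $a\,\mathbf{y}(c)$ decidable; since these generate, every object receives an epimorphism from a coproduct of them, which is decidable, so $\ce$ is locally decidable. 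For $(3)\Rightarrow(1)$: a Boolean topos has every subobject complemented, so every object is decidable; the inverse image $f^{*}$ is lex and preserves coproducts and monomorphisms, hence sends decidable objects to decidable objects; and a localic morphism exhibits each object of $\ce$ as a subquotient of some $f^{*}c$. A subobject of a decidable object is decidable, so every object of $\ce$ is a quotient of a decidable one, i.e.\ $\ce$ is locally decidable.

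The substance of the theorem, and what I expect to be the main obstacle, is the passage \emph{out} of $(1)$: manufacturing from bare local decidability either a Boolean base or an all-epic site. First I would form the full subcategory $\ce_{d}\subseteq\ce$ of decidable objects; by the lemmas above it is closed under subobjects and finite limits, and local decidability forces it to be generating. The naive temptation---to present $\ce$ as sheaves on a small generating full subcategory of $\ce_{d}$ with the canonical topology---fails for $(2)$, since any such subcategory inevitably contains non-epic arrows (projections out of products, inclusions of proper subobjects). The correct route for $(1)\Rightarrow(3)$ is to organize $\ce_{d}$ into the base of a localic morphism: one reconstructs from it a Boolean topos $\cf$ together with an internal locale $L$ in $\cf$ so that $\ce\simeq\mathsf{Sh}_{\cf}(L)$ with $\ce\to\cf$ localic. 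Conceptually this is the \'etale part of the hyperconnected--localic analysis, or, via Joyal--Tierney, the fact that a locally decidable topos is the classifying topos of an \'etale localic groupoid; for the technical construction of $\cf$ and the verification that it is Boolean I would cite \cite[C5.4]{elephant2}.

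Finally $(3)\Rightarrow(2)$ recovers the epic site: an \'etale groupoid presentation has only invertible---hence epic---structure arrows, and assembling a site for the Boolean base $\cf$ with the internal site of $L$ along these \'etale data yields a site of presentation of $\ce$ all of whose maps are epic. The delicate point throughout is not any single verification but the manufacture of $\cf$ and of the \'etale groupoid in the first place; everything else is the formal bookkeeping recorded above. The already-established $(3)\Rightarrow(1)$ is then a redundant but reassuring check, since the cycle $(1)\Rightarrow(3)\Rightarrow(2)\Rightarrow(1)$ already closes the equivalence.
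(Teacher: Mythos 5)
You should first note what the paper itself does with this statement: it gives \emph{no proof at all}. The theorem is quoted from Johnstone \cite[C5.4.4]{elephant2}, attributed to Freyd \cite{Aspects}, and the author explicitly says the characterization ``we may take as a definition.'' So the only fair benchmark is the cited source, and measured against that your two anchoring implications are genuinely proved and correct: for $(2)\Rightarrow(1)$ the apartness subpresheaf $A(d)=\{(f,g)\in C(d,c)^2 : f\neq g\}$ is indeed a subfunctor precisely because every arrow of $C$ is epic, representables are then decidable, and sheafification (lex and cocontinuous) together with closure of decidables under coproducts in an extensive category does the rest; for $(3)\Rightarrow(1)$ you correctly use the definition of localic morphism (every object of $\ce$ is a subquotient of some $f^{*}b$), that Boolean topoi consist of decidable objects, and that decidability is inherited by subobjects and preserved by $f^{*}$. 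These bookkeeping lemmas are all sound.

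The gap is in the two directions that carry the actual content. For $(1)\Rightarrow(3)$ you construct nothing: after correctly observing that the decidable objects form a generating subcategory closed under subobjects and finite limits, you defer the manufacture of the Boolean topos $\cf$ and the internal locale entirely to \cite[C5.4]{elephant2} --- which is the theorem being proved, so as a blind proof this step is a citation, not an argument. More seriously, your sketch of $(3)\Rightarrow(2)$ is unsound as stated: in an \'etale localic groupoid the structure maps (source and target) are local homeomorphisms, \emph{not} isomorphisms, so ``only invertible --- hence epic --- structure arrows'' is not a property such groupoids have; and even where groupoid arrows are invertible, invertibility of arrows \emph{of the groupoid} has no direct bearing on epicity of arrows \emph{of a site} presenting $\ce$. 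The instructive toy case is $\cb=\Set$: there $\ce$ is localic, and an all-epic site exists for the quite different reason that a site can be taken to be a poset of opens, where every arrow is epic simply because hom-sets are subsingletons --- no groupoid datum enters. The passage from a localic $f:\ce\to\cb$ with $\cb$ Boolean to a site in which every map is epic is exactly the delicate assembly Johnstone performs, and your proposal supplies no mechanism for it. In short: correct and complete on $(2)\Rightarrow(1)$ and $(3)\Rightarrow(1)$, outsourced on $(1)\Rightarrow(3)$, and incorrect in the offered justification of $(3)\Rightarrow(2)$; since the paper proves nothing here, your attempt exceeds the paper on the easy directions but does not close the theorem.
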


\begin{rem}
Recall that a localic topos $\ce$ is a topos of sheaves over a locale. The theorem above (which is due to Freyd \cite{Aspects}) shows that a locally decidable topos is still a topos of sheaves over a locale, but the locale is not in $\Set$. It is instead in some boolean topos. A boolean topos is the closest kind of topos we can think of to the category of sets itself. For more details, we redirect the reader to the Background section, where we give references to the literature.
\end{rem}


\begin{proof}[Proof of Thm. \ref{LDCAECs}]
\begin{itemize}
  \item[]
  \item Let $\ce$ be a locally decidable topos. By Exa. \ref{esempiobase}, it is enough to show that every map in $\mathsf{pt}(\ce)$ is a monomorphism. This is more or less a folklore result, let us give the shortest path to it given our technology. Recall that one of the possible characterization of a locally decidable topos is that it has a localic geometric morphism into a boolean topos $\ce \to \cb$. If $\cb$ is a boolean topos, then every map in $\mathsf{pt}(\cg)$ is a monomorphism \cite[D1.2.10, last paragraph]{elephant2}. Now, the induce morphism below,

\[ \textsf{pt}(\ce) \to \textsf{pt}(\cb),\]
is faithful by Prop. \cite[Prop. 4.4]{thcat}. Thus every map in $\textsf{pt}\ce$ must be a monomorphism.
  \item Let's show that for an accessible category with directed colimits $\ca$, its Scott topos is locally decidable. By \citep[C5.4.4]{elephant2}, it's enough to prove that $\mathsf{S}\ca$ has a site where every map is an epimorphism. Using \cite[Rem. 2.9]{thcat}, $\ca_\kappa^{\circ}$ is a site of definition of $\mathsf{S}\ca$, and since every map in $\ca$ is a monomorphism, every map in $\ca_\kappa^{\circ}$ is epic.
  \end{itemize}
\end{proof}

The previous theorem admits an even sharper version.

\begin{thm} Let $\ca$ be an accessible category with directed colimits and a faithful functor $\mathsf{U}: \ca \to \Set$ preserving directed colimits. If $\mathsf{S}\ca$ is locally decidable, then  every map in $\ca$ is a monomorphism.
\end{thm}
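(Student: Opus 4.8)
The plan is to reverse the first half of the proof of Theorem~\ref{LDCAECs}, using the faithful functor $\mathsf{U}$ to pull monomorphisms in $\mathsf{pt}\mathsf{S}(\ca)$ back to monomorphisms in $\ca$. The whole argument rests on two facts: (i) local decidability of $\mathsf{S}\ca$ forces every morphism of $\mathsf{pt}\mathsf{S}(\ca)$ to be a monomorphism, and (ii) the unit $\eta_\ca\colon \ca \to \mathsf{pt}\mathsf{S}(\ca)$ is faithful. Granting both, I would finish with the elementary observation that a faithful functor reflects monomorphisms: if $f$ is a morphism of $\ca$, then $\eta_\ca(f)$ is a monomorphism by (i), and if $fg=fh$ then applying $\eta_\ca$ and cancelling the monic $\eta_\ca(f)$ gives $\eta_\ca(g)=\eta_\ca(h)$, whence $g=h$ by (ii); so $f$ is a monomorphism.

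For (i) I would argue exactly as in the first bullet of the proof of Theorem~\ref{LDCAECs}, now with $\ce=\mathsf{S}\ca$. Since $\mathsf{S}\ca$ is locally decidable, the characterization \cite[C5.4.4]{elephant2} provides a localic geometric morphism $\mathsf{S}\ca \to \cb$ into a Boolean topos $\cb$. In a Boolean topos every morphism of points is a monomorphism \cite[D1.2.10]{elephant2}, and the induced functor $\mathsf{pt}(\mathsf{S}\ca) \to \mathsf{pt}(\cb)$ is faithful by \cite[Prop. 4.4]{thcat}, precisely because the geometric morphism is localic. As faithful functors reflect monomorphisms, every morphism of $\mathsf{pt}(\mathsf{S}\ca)$ is a monomorphism.

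The crux is (ii), and this is exactly where the hypothesis on $\mathsf{U}$ enters. Since $\mathsf{U}\colon\ca\to\Set$ preserves directed colimits, it is an object of $\mathsf{S}(\ca)=\text{Acc}_\omega(\ca,\Set)$. Evaluation at this object defines a functor $\mathsf{ev}_\mathsf{U}\colon \mathsf{pt}(\mathsf{S}\ca)\to\Set$ sending a point $p$ to $p^*(\mathsf{U})$ and a $2$-cell $\alpha\colon p^*\Rightarrow q^*$ to its component $\alpha_\mathsf{U}$. Recalling that $\eta_\ca(a)$ is the evaluation point $\mathsf{ev}_a$ with inverse image $F\mapsto F(a)$, and that $\eta_\ca(f)$ is the natural transformation with component $F\mapsto F(f)$, I would check that $\mathsf{ev}_\mathsf{U}\circ\eta_\ca$ sends $a\mapsto \mathsf{ev}_a(\mathsf{U})=\mathsf{U}(a)$ and $f\mapsto \mathsf{U}(f)$; that is, $\mathsf{ev}_\mathsf{U}\circ\eta_\ca=\mathsf{U}$. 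Since $\mathsf{U}$ is faithful and factors through $\eta_\ca$, the functor $\eta_\ca$ is itself faithful, establishing (ii) and completing the proof.

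I expect the only delicate point to be the bookkeeping in (ii): one must confirm that evaluation at the object $\mathsf{U}\in\mathsf{S}\ca$ is well defined and functorial on the $2$-category of points, and that the conventions for the direction of the $2$-cells underlying morphisms of points make $\mathsf{ev}_\mathsf{U}\circ\eta_\ca$ equal to $\mathsf{U}$ on the nose rather than to its opposite. Everything else is a direct invocation of Theorem~\ref{LDCAECs} together with the standard fact that faithful functors reflect monomorphisms.
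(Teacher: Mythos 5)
Your proposal is correct and follows the same route as the paper's proof: both arguments pass through the characterization of locally decidable topoi as those admitting a localic geometric morphism into a Boolean topos $\cg$, the fact that every map in $\mathsf{pt}(\cg)$ is a monomorphism by \cite[D1.2.10]{elephant2}, faithfulness of $\mathsf{pt}$ applied to a localic morphism via \cite[Prop. 4.4]{thcat}, and reflection of monomorphisms along a faithful functor. The paper merely collapses your steps (i) and (ii) into one: it observes that the composite $\ca \xrightarrow{\eta_\ca} \mathsf{pt}\mathsf{S}(\ca) \to \mathsf{pt}(\cg)$ is faithful and reflects monomorphisms along this single composite, rather than first establishing that every map of $\mathsf{pt}\mathsf{S}(\ca)$ is monic; this rearrangement is immaterial. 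The one substantive difference is your treatment of (ii): where the paper simply cites \cite[Prop. 4.13]{thcat} for the faithfulness of $\eta_\ca$ (which is exactly where the hypothesis on $\mathsf{U}$ enters for the paper as well), you re-derive it by noting that $\mathsf{U}$ is itself an object of $\mathsf{S}(\ca)=\text{Acc}_\omega(\ca,\Set)$, that evaluation at $\mathsf{U}$ defines a functor $\mathsf{ev}_\mathsf{U}\colon \mathsf{pt}\mathsf{S}(\ca)\to\Set$, and that $\mathsf{ev}_\mathsf{U}\circ\eta_\ca=\mathsf{U}$, so faithfulness of $\mathsf{U}$ forces faithfulness of $\eta_\ca$. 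This factorization argument is sound (with the paper's convention that morphisms of points are natural transformations between inverse images, the identification holds on the nose), and it buys a self-contained proof at the cost of reproving a cited lemma --- indeed, your evaluation argument is in all likelihood the proof of the cited proposition.
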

\begin{proof}

\begin{enumerate}
  \item[]
\item[Step 1]  If $\cg$ is a boolean topos, then every map in $\mathsf{pt}(\cg)$ is a monomorphism \cite[D1.2.10, last paragraph]{elephant2}.
\item[Step 2] Recall that one of the possible characterization of a locally decidable topos is that it has a localic geometric morphism into a boolean topos $\mathsf{S}(\ca) \to \cg$.
\item[Step 3]  In the following diagram
\[\ca \stackrel{\eta_\ca}{\to} \textsf{pt}\textsf{S}(\ca) \stackrel{}{\to} \textsf{pt}(\cg),\]
the composition is a faithful functor by \cite[Prop. 4.4 and 4.13]{thcat}. Thus $\ca$ has a faithful functor into a category where every map is a monomorphism. As a result every map in $\ca$ is a monomorphism.
\end{enumerate}
\end{proof}

\begin{rem}The following corollary gives a complete characterization of those continuous categories that are abstract elementary classes. Recall that continuous categories were defined in \cite{cont} in analogy with continuous posets in order to study exponentiable topoi. Among the possible characterizations, a category is continuous if and only if it is a reflective subcategory of a finitely accessible category whose right adjoint preserve directed colimits. We discussed continuous categories in the first section of \cite{thcat}.
\end{rem}
 
\begin{cor}[Continuous categories and AECs]
Let $\ca$ be a continuous category. The following are equivalent:
\begin{enumerate}
  \item $\ca$ is an AEC.
  \item Every map in $\ca$ is a monomorphism.
  \item $\mathsf{S}(\ca)$ is locally decidable.
  \end{enumerate}
\end{cor}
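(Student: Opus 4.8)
The plan is to establish $(1)\Leftrightarrow(2)$ and $(2)\Leftrightarrow(3)$, drawing every ingredient from the single reflective inclusion that witnesses continuity. Write $\iota\colon\ca\hookrightarrow\cf$ for the inclusion of $\ca$ as a reflective subcategory of a finitely accessible category $\cf$, so that $\iota$ is fully faithful and, being the right adjoint, preserves directed colimits by the assumed characterization of continuous categories. In particular $\ca$ lies in $\text{Acc}_\omega$, so the whole Scott machinery applies to it.

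First I would record two consequences of this inclusion. Since $\iota$ is a right adjoint it preserves all limits, hence preserves monomorphisms (a map $m$ is monic exactly when the square on $m,m$ and identities is a pullback, and right adjoints preserve pullbacks); being fully faithful, it is a fortiori nearly full and full with respect to isomorphisms. Secondly, writing $\cf\cong\mathsf{Ind}(C)$ with $C$ essentially small, the coproduct functor $\coprod_{c\in C}\cf(c,-)\colon\cf\to\Set$ is faithful (the family $\{\cf(c,-)\}_{c\in C}$ is jointly faithful) and preserves directed colimits (each $\cf(c,-)$ does, and coproducts commute with directed colimits in $\Set$). Composing with $\iota$ produces a faithful functor $\mathsf{U}\colon\ca\to\Set$ preserving directed colimits --- precisely the datum required to invoke the sharper version of \Cref{LDCAECs}.

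With these in hand the equivalences assemble quickly. The implication $(1)\Rightarrow(2)$ is the definition of an AEC (\Cref{AEC}). For $(2)\Rightarrow(1)$, the functor $\iota$ is a full-with-respect-to-isomorphisms, nearly full embedding into the finitely accessible category $\cf$ preserving directed colimits and monomorphisms, so \Cref{AEC} identifies $\ca$ as an AEC. The implication $(2)\Rightarrow(3)$ is exactly the second half of the proof of \Cref{LDCAECs}: as every map of $\ca$ is monic, every map of the site $\ca_\kappa^\circ$ is epic, whence $\mathsf{S}(\ca)$ is locally decidable. Finally $(3)\Rightarrow(2)$ is the sharper version of \Cref{LDCAECs} applied to the functor $\mathsf{U}$ just constructed.

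The only step needing genuine care is the construction of $\mathsf{U}$: one must verify that the chosen functor $\cf\to\Set$ is simultaneously faithful \emph{and} directed-colimit-preserving, which is precisely why the coproduct (rather than a product) of representables is used, and that both properties survive precomposition with $\iota$. Everything else is a bookkeeping of results already proved, so I expect no serious obstacle beyond confirming that continuity genuinely supplies both the monomorphism-preserving embedding into a finitely accessible category and the faithful functor into $\Set$.
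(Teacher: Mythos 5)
Your proof is correct and takes essentially the same route as the paper, whose entire proof is the one-sentence observation that $\ca$, being a split subobject in $\text{Acc}_\omega$ of a finitely accessible category, satisfies the hypotheses of the Beke--Rosick\'y characterization (\Cref{AEC}), with the equivalence to (3) left implicit in the surrounding results of \Cref{logicaec}. Your explicit construction of the faithful, directed-colimit-preserving functor $\mathsf{U}\colon\ca\to\Set$ (a coproduct of representables precomposed with the inclusion), needed to invoke the sharper version of \Cref{LDCAECs} for $(3)\Rightarrow(2)$, fills in a step that the paper's proof does not address at all.
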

\begin{proof}
Since it's a split subobject in $\text{Acc}_{\omega}$ of a finitely accessible category, the hypotheses of \citep[5.7]{aec} are met.
\end{proof}

\section{Categories of saturated objects, atomicity and categoricity} \label{logicsaturatedobjects}

\begin{rem}
In this section we define categories of saturated objects and study their connection with atomic topoi and categoricity. The connection between atomic topoi and categoricity was pointed out in \citep{Caramelloatomic}. This section corresponds to a kind of syntax-free counterpart of \citep{Caramelloatomic}. In the definition of \textit{category of saturated objects} we axiomatize the relevant properties of the inclusion $\iota: \Set_{\kappa} \to \Set$ and we prove the following two theorems.
\begin{thm*}
\begin{enumerate}
  \item[]
  \item If $\ca$ is a category of saturated objects, then $\mathsf{S}(\ca)$ is an atomic topos.
  \item If in addition $\ca$ has  the joint embedding property, then $\mathsf{S}(\ca)$ is boolean and two valued.
  \item If in addition $\eta_\ca$ is isofull and faithful and surjective on objects, then $\ca$ is categorical in some presentability rank.
\end{enumerate}
\end{thm*}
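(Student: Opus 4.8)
The plan is to prove the three items in sequence, leaning on the fact that $\ca_\kappa^\circ$ is a site of definition of $\mathsf{S}(\ca)$ \cite[Rem. 2.9]{thcat}, exactly as in the proof of \Cref{LDCAECs}, and on the characterization of atomic topoi via sites \cite[C3.5]{elephant2}: a Grothendieck topos is atomic precisely when it admits a site $(C,J)$ whose underlying category satisfies the (right) Ore/amalgamation condition and whose topology $J$ is the \emph{atomic} one, i.e. every nonempty sieve is covering. For the first item I would read off from the axioms defining a category of saturated objects — which are modelled on $\iota \colon \Set_\kappa \to \Set$ — the three ingredients I expect them to encode: every morphism of $\ca$ is a monomorphism (hence every morphism of $\ca_\kappa^\circ$ is an epimorphism, as already exploited in \Cref{LDCAECs}), the $\kappa$-presentable objects have the amalgamation property, and the objects are \emph{saturated} in the sense that maps out of $\kappa$-presentable objects extend along embeddings. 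Amalgamation in $\ca_\kappa$ gives the Ore condition in $\ca_\kappa^\circ$, so the atomic topology on $\ca_\kappa^\circ$ is well defined, and I would then identify the Scott topology $\tau$ on $\ca_\kappa^\circ$ with this atomic topology. This identification is where saturation does its work and is the main obstacle: one must show that, because every map out of a $\kappa$-presentable object into an object of $\ca$ admits an extension, every nonempty sieve on an object of $\ca_\kappa^\circ$ already refines a $\tau$-covering sieve, forcing $\tau = J_{at}$. Granting this, $\mathsf{S}(\ca)$ is atomic.

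For the second item the new content is only two-valuedness, since atomic topoi are automatically Boolean \cite[C3.5]{elephant2}, so the first item already delivers Booleanness of $\mathsf{S}(\ca)$. A topos of sheaves for the atomic topology on a category $C$ is two-valued exactly when $C$ is connected, i.e. when any two of its objects are linked by maps from (or to) a common object; in that case $\Sub(1)=\{0,1\}$. I would deduce connectedness of $\ca_\kappa^\circ$ from the joint embedding property: JEP on $\ca$ transfers to $\ca_\kappa$ by the standard factorization argument (two $\kappa$-presentable objects embed into a common object of $\ca$, and since they are $\kappa$-presentable the embeddings factor through a $\kappa$-presentable stage of a $\kappa$-directed presentation of that object), which is precisely connectedness of $\ca_\kappa^\circ$. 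Hence $\mathsf{S}(\ca)$ is boolean and two-valued.

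For the third item I would argue by transport along the unit $\eta_\ca \colon \ca \to \mathsf{pt}\,\mathsf{S}(\ca)$ of the Scott adjunction (\Cref{scottadj}). Since $\mathsf{S}(\ca)$ is atomic and two-valued, its category of points carries the strong homogeneity that \cite{Caramelloatomic} attaches to atomic classifying topoi: this is the point-side shadow of the Ryll-Nardzewski/Fraïssé phenomenon encoded by atomicity, and it yields that at some presentability rank the points of $\mathsf{S}(\ca)$ form a single isomorphism class. The hypotheses that $\eta_\ca$ is faithful, iso-full and surjective on objects make $\eta_\ca$ reflect isomorphisms and reach every point up to isomorphism, so this uniqueness descends: all objects of $\ca$ of that rank are isomorphic, i.e. $\ca$ is categorical in some presentability rank. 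Besides the topology-identification of the first paragraph, the delicate step here is pinning down exactly which homogeneity statement about the points of an atomic two-valued topos is needed and verifying that the three properties of $\eta_\ca$ are precisely what is required to push it back from $\mathsf{pt}\,\mathsf{S}(\ca)$ to $\ca$.
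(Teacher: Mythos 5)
There is a genuine gap, and it sits at the heart of item (1): you never use the ambient finitely accessible category $\ck$ and the topological embedding $j\colon \ca \to \ck$, which are the entire content of the definition of a \emph{category of saturated objects}. The axioms you ``read off'' from that definition --- every morphism of $\ca$ is a monomorphism, the $\kappa$-presentable objects of $\ca$ have amalgamation, and objects of $\ca$ are injective with respect to maps between $\ca$'s own $\kappa$-presentables --- are not what the definition says. It says: $\ck$ is finitely accessible, $j\ca \subset \Sat_\omega(\ck)$ (injectivity with respect to maps between \emph{finitely presentable objects of} $\ck$), and $\ck_\omega$ (not $\ca_\kappa$) has the amalgamation property. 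Because of this, your key step --- identifying the Scott topology on $\ca_\kappa^\circ$ with the atomic topology --- is unsupported, and it in fact fails in the motivating example $\ca = \Set_{\geq\kappa} \hookrightarrow \Set$: the $\kappa^+$-presentable objects of $\Set_{\geq\kappa}$ are the sets of cardinality exactly $\kappa$, and a bijection $p \to s$ onto an object $s$ of cardinality $\kappa$ admits no extension along a non-surjective injection $p \to p'$; so the nonempty sieve generated by $p \to p'$ in $\ca_{\kappa^+}^\circ$ is not covering, and the induced topology on $\ca_{\kappa^+}^\circ$ is \emph{not} atomic, even though $\mathsf{S}(\ca)$ is an atomic topos --- its atomicity is witnessed by a different site.

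That different site is exactly what the paper's proof produces. Since $j$ is a topological embedding, $\mathsf{S}(j)$ presents $\mathsf{S}(\ca)$ as a subtopos $j^*\colon \Set^{\ck_\omega} \leftrightarrows \mathsf{S}(\ca) : j_*$ (using that the Scott topos of the finitely accessible $\ck$ is $\Set^{\ck_\omega}$, \cite[Rem. 2.13]{thcat}); then the saturation axiom says precisely that any map $k \to ja$ with $k,k' \in \ck_\omega$ extends along any $k \to k'$, so each $j^*yk \to j^*yk'$ is an epimorphism and the induced topology on $\ck_\omega$ is the atomic one, the amalgamation property of $\ck_\omega$ being what makes the atomic topology a Grothendieck topology at all (this is the argument of \cite[Cor. 4.9]{simon}). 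Your items (2) and (3) inherit the problem, since they are built over the same wrong site. For comparison: the paper proves (2) not through connectedness of a site but through functoriality --- JEP makes the terminal functor $\ca \to \cdot$ a lax-epi, hence $\mathsf{S}(\ca) \to \Set$ has fully faithful inverse image, i.e.\ $\mathsf{S}(\ca)$ is connected, and connected atomic topoi are boolean and two-valued \cite[4.2.17]{caramello2018theories} --- while its proof of (3) is essentially the transfer-along-$\eta_\ca$ argument you sketch, via Caramello's countable categoricity of the points of such topoi \cite{Caramelloatomic} together with \cite[Prop. 4.13]{thcat}.
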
 

\begin{thm*} 
If $\ce$ is an atomic topos, then $\mathsf{pt}(\ce)$ is a \textit{candidate} category of saturated objects.
\end{thm*}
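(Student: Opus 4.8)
The plan is to check, clause by clause, that $\mathsf{pt}(\ce)$ satisfies the (structural) axioms of a candidate category of saturated objects, reading those axioms off the prototype $\iota\colon\Set_\kappa\to\Set$. The ambient requirement that $\mathsf{pt}(\ce)$ lie in $\text{Acc}_\omega$ is free: as recalled above for the functor of points, $\mathsf{pt}(\ce)=\text{Topoi}(\Set,\ce)$ is accessible with directed colimits by \cite[Cor. 4.3.2]{borceux_19943}, with directed colimits computed as in $\Set^\ce$. No use of atomicity is needed here, so I would dispose of this clause first.

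Next I would verify that every morphism of $\mathsf{pt}(\ce)$ is a monomorphism, the categorical shadow of the fact that the relevant maps of $\Set$ are the injections. Here atomicity enters only through the standard fact that a Grothendieck atomic topos is Boolean (\cite[C3.5]{elephant2}); then, by the very input already used in the proof of \Cref{LDCAECs}, namely \cite[D1.2.10, last paragraph]{elephant2}, every map in the category of points of a Boolean topos is a monomorphism. Thus $\mathsf{pt}(\ce)$ is an accessible category with directed colimits in which every arrow is monic, matching the first block of axioms.

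The substantive step is amalgamation together with the saturation/homogeneity clause. I would present $\ce$ through an atomic site, exhibiting it as sheaves on a small category $C$ satisfying the right Ore condition (dually: amalgamation) for the atomic topology $J_{\mathrm{at}}$, whose covering sieves are exactly the inhabited ones. By \Cref{esempiobase} the category $\mathsf{pt}(\ce)$ embeds fully faithfully into $\mathsf{pt}(\Set^C)\cong\mathsf{Ind}(C)$, as the full subcategory of those ind-objects whose classifying flat functor is $J_{\mathrm{at}}$-continuous. Read model-theoretically, $J_{\mathrm{at}}$-continuity is precisely a realization-of-types demand, so the saturation clause should hold by construction: every point of $\ce$ is a saturated object. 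Amalgamation of a span of monos of points would then be produced by writing both legs as $\kappa$-directed colimits of objects of $C$, amalgamating the presentable pieces by the Ore condition in a back-and-forth fashion, taking the colimit, and finally checking that the resulting ind-object is again $J_{\mathrm{at}}$-continuous, hence a genuine point.

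The main obstacle I anticipate is exactly this last check, and it is the reason the conclusion is only a \emph{candidate} category of saturated objects rather than a genuine one. An atomic topos need not have enough points, so $\mathsf{pt}(\ce)$ may be too sparse — in the extreme, empty — to witness the saturation clause or to guarantee that the colimit amalgam remains $J_{\mathrm{at}}$-continuous. Promoting the candidate to a genuine category of saturated objects would therefore require a hypothesis of the same nature as the enough-points / counit-equivalence conditions governing \Cref{classificatore} and \Cref{isbellclassificatore}, controlling the gap between $\ce$ and $\mathsf{S}\mathsf{pt}(\ce)$; absent that, one can certify only the structural axioms.
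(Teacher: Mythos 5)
Your overall skeleton (present $\ce$ by an atomic site, apply $\mathsf{pt}$, land fully faithfully in $\mathsf{Ind}(C)$, read saturation of points off $J_{\mathrm{at}}$-continuity) is the same route as the paper, which disposes of the whole theorem in one line by citing \cite{Caramelloatomic} for exactly these facts. The problem is that you misread the definition you are verifying, in two places. First, clause (3) of the definition of a (candidate) category of saturated objects asks for the amalgamation property of $\ck_\omega$, i.e.\ essentially of the site $C$ itself --- \emph{not} for amalgamation of spans in $\mathsf{pt}(\ce)$. For an atomic site this is automatic: it is precisely the Ore-type condition needed for the inhabited-sieve topology to be a Grothendieck topology at all (the paper makes the same point inside the proof of \Cref{thmcategoriesofsaturated objects}). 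So your ``substantive step'' --- back-and-forth amalgamation of points followed by a check that the colimit amalgam is still $J_{\mathrm{at}}$-continuous --- is proving something the statement never asks for, and the obstacle you flag there is a non-issue. (Likewise, ``every map is a monomorphism'' is not among the axioms, so that paragraph, though correct, is extraneous.)

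Second, you misidentify what ``candidate'' relaxes. A candidate category of saturated objects must still satisfy all of (1)--(3); what is weakened is only that the functor $j$ need not be a \emph{topological embedding}, merely a functor. So your conclusion that ``one can certify only the structural axioms'' is not what the theorem claims: the saturation clause (2) and the amalgamation clause (3) must be, and are, certified --- clause (2) being exactly the Caramello input that a point of an atomic topos, viewed in $\mathsf{Ind}(C)$, is injective with respect to maps of $C$, and clause (3) coming for free from the atomic site. The genuine reason the conclusion is only a candidate is that $\mathsf{pt}(i)\colon \mathsf{pt}(\ce)\to\mathsf{Ind}(C)$ may fail to be a topological embedding, e.g.\ when $\ce$ lacks enough points; your closing remark about the gap between $\ce$ and $\mathsf{S}\mathsf{pt}(\ce)$ is the right intuition, but you attach it to the wrong clauses. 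Under the correct reading, the ingredients you assembled (accessibility via \cite[Cor.\ 4.3.2]{borceux_19943}, the full and faithful comparison of \Cref{esempiobase}, saturation of points, amalgamation in the site) already suffice; as written, however, your argument declares an open obstacle where there is none and does not certify what actually needs certifying.
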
 
\end{rem}


Let us recall (or introduce) the notion of $\omega$-saturated object in an accessible category and the joint embedding property.

\begin{defn}
Let $\ca$ be an accessible category. We say that $s \in \ca$ is $\omega$-saturated if it is injective with respect to maps between finitely presentable objects. That is, given a morphism between finitely presentable objects $f: p \to p'$ and a map $p \to s$, there exists a lift as in the diagram below.
\begin{center}
\begin{tikzcd}
s                     &                       \\
p \arrow[u] \arrow[r] & p' \arrow[lu, dashed]
\end{tikzcd}
\end{center}
\end{defn}

\begin{rem}
In general, when we look at accessible categories from the perspective of model theory, every map in $\ca$ is a monomorphism, and this definition is implicitly adding the hypothesis that every morphism is \textit{injective}.
\end{rem}

\begin{rem}
A very good paper to understand the categorical approach to saturation is \cite{Rsaturated}.
\end{rem}

\begin{defn}
Let $\ca$ be a category. We say that $\ca$ has the joint embedding property if given two objects $A,B$ there exist and object $C$ and two morphisms $A \to C$, $B \to C$.
\end{defn}

\begin{rem}
In \citep{simon}, Henry proves that there are AECs that cannot appear as the category of points of a topos, which means that they cannot be axiomatized in $\text{L}_{\infty, \omega}$. This answers a question initially asked by Rosický at the conference Category Theory 2014 and makes a step towards our understanding of the connection between accessible categories with directed colimits and axiomatizable classes.
The main tool that allows him to achieve this result is called in the paper the \textit{Scott construction}; he proves the Scott topos of $\Set_{\geq \kappa}$\footnote{The category of sets of cardinality at least $\kappa$ and injective functions} is atomic. Even if we developed together the relevant rudiments of the Scott construction, the reason for which this result was true appeared to the author of this paper enigmatic and mysterious. With this motivation in mind we\footnote{The author of this paper.} came to the conclusion that  the Scott topos of $\Set_{\geq \kappa}$ is atomic because of the fact that $\Set_{\geq \kappa}$ appears as a subcategory of saturated objects in $\Set$. 
\end{rem}

\begin{rem}
As a direct corollary of the theorems in this section one gets back the main result of \citep{simon}, but this is not the main accomplishment of this section. Our main contribution is to present a conceptual understanding of \citep{simon} and a neat technical simplification of his proofs. We also improve our poor knowledge of the Scott adjunction, trying to collect and underline its main features. We feel that the Scott adjunction might serve as a tool to have a categorical understanding of Shelah's categoricity conjecture for accessible categories with directed colimits.
\end{rem}

\begin{rem}[What is categoricity and what about the categoricity conjecture?]
Recall that a category of models of some theory is categorical in some cardinality $\kappa$ if it has precisely one model of cardinality $\kappa$. Morley has shown in 1965 that if a category of models is categorical in some cardinal $\kappa$, then it must be categorical in any cardinal above and in any cardinal below up to $\omega_1$ (\cite{chang1990model}). We will be more precise about Morley's result in the section about open problems. When Abstract elementary classes were introduced in the 1970's, Shelah chose Morley's theorem as a sanity check result for his definition. Since then, many approximations of these results has appeared in the literature. The most updated to our knowledge is contained in \cite{vasey2019categoricity}. We recommend the paper also as an introduction to this topic.
\end{rem}

\begin{defn}[(Candidate) categories of ($\omega$-)saturated objects] Let $\ca$ be a category in $\text{Acc}_{\omega}$. We say that $\ca$ is a category of (finitely) saturated objects if there a is topological embedding $j: \ca \to \ck$ in $\text{Acc}_{\omega}$ such that:
\begin{enumerate}
\item $\ck$ is a finitely accessible category.
\item $j\ca \subset \text{Sat}_\omega(\ck)$\footnote{The full subcategory of $\omega$-saturated objects.}.
\item $\ck_{\omega}$ has the amalgamation property\footnote{A category has the amalgamation property is every span can be completed to a square.}.
\end{enumerate}
We say that $\ca$ is a candidate category of (finitely) saturated objects if there exists a functor $j$ that verifies $(1)$-$(3)$.
\end{defn}

\begin{rem}
The notion of \textit{category of saturated objects} axiomatizes the properties of the inclusion $j:\Sat_\omega(\ck) \hookrightarrow \ck$, our motivating example was the inclusion of $\Set_{\geq \kappa} \hookrightarrow \Set_{\geq \omega} \hookrightarrow \Set$. The fact that every object in $\Set_{\geq \kappa}$ is injective with respect to finite sets is essentially the axiom of choice. \citep{Rsaturated} describes a direct connection between saturation and amalgamation property, which was also implied in \citep{Caramelloatomic}.
\end{rem}

In \citep{Caramelloatomic}, Caramello proves - essentially - that the category of points of an atomic topos is a category of saturated objects and she observes that it is countable categorical. This shows that there is a deep connection between categoricity, saturation and atomic topoi. We recall the last notion before going on with the exposition.

\begin{defn}[Characterization of atomic topoi, {\cite[C3.5]{elephant2}}] Let $\cg$ be a Grothendieck topos, then the following are equivalent:
\begin{enumerate}
\item $\cg$ is atomic.
\item $\cg$ is the category of sheaves over an atomic site.
\item The subobject lattice of every object is a complete atomic boolean algebra.
\item Every object can be written as a disjoint union of atoms.
\end{enumerate}
\end{defn}

\begin{thm} \label{thmcategoriesofsaturated objects}
\begin{enumerate}
  \item[]
  \item If $\ca$ is a category of saturated objects, then $\mathsf{S}(\ca)$ is an atomic topos.
  \item If in addition $\ca$ has  the joint embedding property, then $\mathsf{S}(\ca)$ is boolean and two valued.
  \item If in addition $\eta_\ca$ is iso-full, faithful and surjective on objects, then $\ca$ is categorical in some presentability rank.
\end{enumerate}
\end{thm}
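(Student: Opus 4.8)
The plan is to prove all three clauses by exhibiting a convenient site of definition for $\mathsf{S}(\ca)$ and recognizing it as an \emph{atomic} site, in the sense of the characterization of atomic topoi recalled above. First I would fix a regular cardinal $\lambda$ for which $\ck$ is $\lambda$-accessible and recall from \cite[Rem. 2.9]{thcat} that a category of presentable objects, opposite, is a site of definition of $\mathsf{S}(\ca)$, so that $\mathsf{S}(\ca) \simeq \mathsf{Sh}(\ca_\lambda^\circ, \tau)$ for the canonical Scott topology $\tau$. By the characterization, to prove clause (1) it suffices to show that $\tau$ is the atomic topology. Since $\ck$ is finitely accessible we have $\mathsf{S}(\ck) \simeq \Set^{\ck_\omega}$, and the topological embedding $j$ induces a comparison of Scott topoi $\mathsf{S}(j)\colon \mathsf{S}(\ca) \to \Set^{\ck_\omega}$; I would transport the question along $j$ to the site $\ck_\omega^\circ$, where the amalgamation hypothesis on $\ck_\omega$ is directly available.

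The central lemma is that the $\omega$-saturation of the objects of $j\ca$ forces the covering families of $\tau$ to be exactly the nonempty sieves. Concretely, the amalgamation property of $\ck_\omega$ says that $\ck_\omega^\circ$ satisfies the right Ore condition, so the atomic topology $J_{at}$ on $\ck_\omega^\circ$ is well defined and $\mathsf{Sh}_{at}(\ck_\omega^\circ)$ is an atomic topos. The work is then to identify $\mathsf{S}(\ca)$ with $\mathsf{Sh}_{at}(\ck_\omega^\circ)$: a functor $\ck_\omega \to \Set$ which arises from an object of $\ca$ should be the same datum as a $J_{at}$-sheaf, and the injectivity of an $\omega$-saturated object against maps between finitely presentable objects is precisely what matches the atomic sheaf condition, namely that every nonempty sieve covers. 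This translation of \emph{saturation} into \emph{atomic sheaf} is the main obstacle: one must verify the sheaf axiom and, crucially, that no strictly finer topology occurs, and I expect the hypothesis $j\ca \subseteq \Sat_\omega(\ck)$ together with the fact that $j$ is a topological embedding (so that $\mathsf{S}(\ca)$ is genuinely carved out of $\Set^{\ck_\omega}$ by a topology on $\ck_\omega^\circ$) to do the heavy lifting. Granting the lemma, $\mathsf{S}(\ca) \simeq \mathsf{Sh}_{at}(\ck_\omega^\circ)$ is atomic, proving clause (1), which is the abstract form of the computation for $\Set_{\geq\kappa}$ in \cite{simon}.

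For clause (2) I would use that atomic topoi are automatically Boolean, since the subobject lattice of each object is a complete atomic Boolean algebra; thus only two-valuedness requires the extra hypothesis. For an atomic site $\mathsf{Sh}_{at}(C)$, two-valuedness is equivalent to $C$ being connected in the sense that any two objects are joined by a span, and on $C = \ck_\omega^\circ$ such spans are cospans in $\ck_\omega$. The joint embedding property (propagated to the finitely presentable level through $j$ and saturation, as in clause (1)) supplies exactly these cospans, so $\mathsf{S}(\ca)$ is two-valued. For clause (3) I would invoke the analysis of \cite{Caramelloatomic}: a two-valued atomic topos has an essentially unique point at the relevant level, so $\mathsf{pt}\mathsf{S}(\ca)$ is categorical in some presentability rank. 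Finally, because $\eta_\ca\colon \ca \to \mathsf{pt}\mathsf{S}(\ca)$ is faithful, iso-full and surjective on objects, it induces a bijection on isomorphism classes of objects of a fixed presentability rank, so the categoricity of $\mathsf{pt}\mathsf{S}(\ca)$ descends to $\ca$. The delicate point here is matching presentability ranks across $\eta_\ca$, which I would control using that $\eta_\ca$ preserves directed colimits and is surjective on objects.
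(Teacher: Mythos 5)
Your proposal is correct in substance, and for clauses (1) and (3) it is essentially the paper's own proof: like the paper, you use the topological embedding $j$ to present $\mathsf{S}(\ca)$ as a subtopos of $\mathsf{S}(\ck)\simeq\Set^{\ck_\omega}$, use amalgamation in $\ck_\omega$ to make the atomic topology on $\ck_\omega^\circ$ well defined (the Ore condition), and use $\omega$-saturation of the objects of $j\ca$ to force every nonempty sieve to be covering --- exactly the argument the paper imports from \cite[Cor. 4.9]{simon}; and clause (3) is in both cases Caramello's categoricity result for points of two-valued atomic topoi, transported back along the iso-full, faithful, surjective-on-objects unit $\eta_\ca$. (Your opening detour through a site $\ca_\lambda^\circ$ via \cite[Rem. 2.9]{thcat} is unnecessary; the site that matters is $\ck_\omega^\circ$, as you then recognize.)

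Where you genuinely diverge is clause (2), and there your argument has a hole that the paper's route avoids. The paper never touches the site: it applies $\mathsf{S}$ to the terminal map $\tau\colon\ca\to\cdot$, observes that JEP makes $\tau$ a lax-epi, so $\mathsf{S}(\tau)^*$ is fully faithful by \cite[Prop. 4.6]{thcat}, i.e.\ $\mathsf{S}(\ca)$ is connected, and then quotes that a connected atomic topos is Boolean and two-valued. You instead argue on the atomic site: two-valuedness amounts to connectedness of $\ck_\omega^\circ$, which by the Ore condition reduces to the existence of spans there, i.e.\ cospans in $\ck_\omega$. That equivalence is fine, but your parenthetical ``propagated to the finitely presentable level through $j$ and saturation'' conceals the only nontrivial step: JEP is a hypothesis on $\ca$, not on $\ck_\omega$, and the definition of category of saturated objects does not say that every finitely presentable object of $\ck$ maps into $j\ca$. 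The step can be repaired: once clause (1) is established, the empty sieve is never covering, so $j^*yk$ is not initial, i.e.\ every $k\in\ck_\omega$ admits a map to some $ja$; then JEP in $\ca$ gives $k\to jc\leftarrow k'$ for a single $c\in\ca$, and since $k,k'$ are finitely presentable and $jc$ is a directed colimit of objects of $\ck_\omega$, both maps factor through a common finitely presentable stage, yielding the required cospan. With that lemma supplied, your clause (2) is a legitimate alternative which keeps the whole proof on one site and makes the combinatorial role of JEP explicit; the paper's functorial argument buys exactly the avoidance of this propagation and does not depend on having already identified the induced topology.
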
 
\begin{proof}

\begin{enumerate}
  \item[]
  \item Let $\ca$ be a category of saturated objects $j: \ca \to \ck$. We must show that $\mathsf{S}(\ca)$ is atomic. The idea of the proof is  very simple; we will show that:
    \begin{itemize}
    \item[(a)] $\mathsf{S}j$  presents $\ca$ as  $j^*: \Set^{\ck_\omega} \leftrightarrows \mathsf{S}(\ca):  j_*$;
    \item[(b)] The induced topology on $\ck_\omega$ is atomic.
    \end{itemize}
  (a) follows directly from the definition of topological embedding and \cite[Rem. 2.13]{thcat}. (b) goes identically to \citep[Cor. 4.9]{simon}: note that for any map $k  \to k' \in \ck_{\omega}$, the induced map $j^*yk \to j^*yk'$ is an epimorphism: indeed any map $k \to ja$ with $a \in \ca$ can be extended along $k \to k'$ because $j$ makes $\ca$ a category of saturated objects. So the induced topology on $\ck_{\omega}$ is the atomic topology (every non-empty sieve is a cover). The fact that $\ck_{\omega}$ has the amalgamation property is needed to make the atomic topology a proper topology.
  \item Because $\ca$ has the joint embedding property, its Scott topos is connected. Indeed a topos is connected when the inverse image of the terminal map $t: \mathsf{S}(\ca) \to \Set$ is fully faithful. $t$ appears as the $\mathsf{S}(\tau)$, where $\tau$ is the terminal map $\tau: \ca \to \cdot$. When $\ca$ has the JEP, and thus is connected, $\tau$ is a lax-epi, and $f^*$ is fully faithful by \cite[Prop. 4.6]{thcat}. Then, $\mathsf{S}(\ca)$ is atomic and connected. By \cite[4.2.17]{caramello2018theories} it is boolean two-valued.
  \item This follows from \cite[Prop. 4.13]{thcat} and \cite{Caramelloatomic}. In fact, Caramello has shown that $\mathsf{ptS}(\ca)$ must be countably categorical and the countable object is saturated (by construction). Thus, the unit of the Scott adjunction must reflect the (essential) unicity of such an object.
\end{enumerate}
\end{proof}

\begin{thm} 
If $\ce$ is an atomic topos, then $\mathsf{pt}(\ce)$ is a \textit{candidate} category of saturated objects.
\end{thm}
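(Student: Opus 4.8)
The plan is to run the proof of Theorem~\ref{thmcategoriesofsaturated objects}(1) in reverse: instead of building an atomic site out of a category of saturated objects, I would read a witnessing functor $j$ off a presentation of $\ce$ as sheaves over an atomic site, and then recognize the points of $\ce$ as the $\omega$-saturated objects of the ambient finitely accessible category.

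First I would invoke the characterization of atomic topoi recalled above to present $\ce \simeq \mathsf{Sh}(D, J_{\mathrm{at}})$ over an atomic site, and (using the Comparison Lemma to replace $D$ by its idempotent completion) arrange that $D$ is Cauchy complete. The very fact that the atomic topology $J_{\mathrm{at}}$ is a well-defined Grothendieck topology on $D$ --- every non-empty sieve being a cover --- is exactly the amalgamation property of $D$; this is the same bookkeeping that appears in the last line of the proof of Theorem~\ref{thmcategoriesofsaturated objects}(1). I would then set $\ck := \mathsf{Ind}(D)$ and let $j := \mathsf{pt}(i)$ be the functor induced by the geometric embedding $i : \ce \hookrightarrow \Set^{D}$, exactly as in Example~\ref{esempiobase}. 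This settles the structural requirements at once: $\ck$ is finitely accessible with $\ck_\omega \simeq D$ (condition~(1)); $j$ preserves directed colimits by Remark~\ref{pt}, so it is a $1$-cell of $\text{Acc}_\omega$; and $\ck_\omega$ has the amalgamation property (condition~(3)) because it is the atomic site $D$ itself. Note that I only need a functor $j$, not a topological embedding, which is precisely why the conclusion is the weaker notion of a \emph{candidate} category of saturated objects.

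The remaining, and genuinely load-bearing, step is condition~(2): that $j$ lands in $\Sat_\omega(\ck)$. Here I would unwind the equivalence $\mathsf{pt}(\Set^{D}) \simeq \mathsf{Ind}(D)$, under which a point $p$ is carried to its underlying flat functor, whose values are the stalks $p^*(yd)$. By Yoneda, the finitely presentable probes $yd \to j(p)$ are exactly the elements of these stalks, so $\omega$-saturation of $j(p)$ --- the lifting of every $yd \to j(p)$ along every $d \to d'$ --- translates verbatim into the statement that $p^*$ sends each $J_{\mathrm{at}}$-covering sieve to an epimorphic family, i.e.\ into the $J_{\mathrm{at}}$-continuity of the flat functor $p^*$. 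Since the points of $\mathsf{Sh}(D, J_{\mathrm{at}})$ are by definition exactly the $J_{\mathrm{at}}$-continuous flat functors, every $p \in \mathsf{pt}(\ce)$ yields an $\omega$-saturated $j(p)$. This is the same mechanism as in \cite[Cor.~4.9]{simon} and the reconstruction of Caramello in \cite{Caramelloatomic}, only read contravariantly.

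I expect the matching in the previous paragraph to be the main obstacle, for two reasons: one must keep careful track of the variance (the atomic topology and the saturation-lifting condition point in opposite directions, and the passage through $\mathsf{pt}$ is contravariant on arrows), and one must ensure that the finitely presentable objects of $\ck$ are genuinely exhausted by the representables --- which is why I would take $D$ Cauchy complete, so that $\ck_\omega \simeq D$ and no stray retracts escape the argument.
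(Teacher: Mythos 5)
Your proposal is correct and takes essentially the same route as the paper: the paper's own proof also presents $\ce$ by an atomic site $i\colon \ce \to \Set^C$ and takes $j = \mathsf{pt}(i)$ as the witnessing functor, simply citing \cite{Caramelloatomic} for the verification of conditions (1)--(3). Your only departure is that you prove the saturation condition (2) directly --- identifying $\omega$-saturation of $j(p)$ with $J_{\mathrm{at}}$-continuity of the corresponding flat functor via Diaconescu --- which is a sound, self-contained expansion of exactly the mechanism the paper delegates to that citation.
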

\begin{proof}
Let $\ce$ be an atomic topos and $i: \ce \to \Set^C$ be a presentation of $\ce$ by an atomic site. It follows from \cite{Caramelloatomic} that $\mathsf{pt}(i)$ presents $\mathsf{pt}(\ce)$ as a candidate category of saturated objects.
\end{proof}

\subsection{Categories of $\kappa$-saturated objects}

Obviously the previous definitions can be generalized to the $\kappa$-case of the Scott adjunction, obtaining analogous results. Let us boldly state them.

\begin{defn}[(Candidate) categories of ($\kappa$-)saturated objects] Let $\ca$ be a category in $\text{Acc}_{\kappa}$. We say that $\ca$ is a category of $\kappa$-saturated objects if there is topological embedding (for the $\mathsf{S}_\kappa$-adjunction) $j: \ca \to \ck$ in $\text{Acc}_{\kappa}$ such that:
\begin{enumerate}
\item $\ck$ is a $\kappa$-accessible category.
\item $j\ca \subset \text{Sat}_\kappa(\ck)$.
\item $\ck_{\kappa}$ has the amalgamation property.
\end{enumerate}
We say that $\ca$ is a candidate category of $\kappa$-saturated objects if there exists a functor $j$ that verifies $(1)$-$(3)$.
\end{defn}

\begin{thm} \label{kappasaturated}
\begin{enumerate}
  \item[]
  \item If $\ca$ is a category of $\kappa$-saturated objects, then $\mathsf{S}_\kappa(\ca)$ is an atomic $\kappa$-topos.
  \item If in addition $\ca$ has  the joint embedding property, then $\mathsf{S}_\kappa(\ca)$ is boolean and two valued.
  \item If in addition $\eta_\ca$\footnote{the unit of the $\kappa$-Scott adjunction.} is iso-full, faithful and surjective on objects, then $\ca$ is categorical in some presentability rank.
\end{enumerate}
\end{thm}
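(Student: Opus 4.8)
The plan is to mirror the proof of \Cref{thmcategoriesofsaturated objects} essentially verbatim, systematically replacing the finitary data by its $\kappa$-indexed counterpart: the Scott adjunction $\mathsf{S} \dashv \mathsf{pt}$ by the $\kappa$-Scott adjunction $\mathsf{S}_\kappa \dashv \mathsf{pt}$, the subcategory $\ck_\omega$ of finitely presentable objects by $\ck_\kappa$, and $\omega$-saturation by $\kappa$-saturation. Before grinding through this, I would isolate the three external inputs on which the finitary argument rests and check that each admits a $\kappa$-version: (i) the presentation of a Scott topos through the site $\ck_\kappa^\circ$, i.e.\ the $\kappa$-analogue of \cite[Rem. 2.13]{thcat}; (ii) the characterization of atomic (and of boolean two-valued) topoi, which is purely topos-theoretic and therefore transfers unchanged to $\kappa$-topoi; and (iii) Caramello's categoricity input for part $(3)$.

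For part $(1)$ I would argue exactly as before. Since $j\colon \ca \to \ck$ is a topological embedding for the $\mathsf{S}_\kappa$-adjunction, the $\kappa$-version of \cite[Rem. 2.13]{thcat} presents $\ca$ as $j^*\colon \Set^{\ck_\kappa} \leftrightarrows \mathsf{S}_\kappa(\ca) : j_*$, so it suffices to identify the induced topology on $\ck_\kappa$. Given any arrow $k \to k'$ in $\ck_\kappa$, a map $k \to ja$ with $a \in \ca$ extends along $k \to k'$ precisely because $ja$ is $\kappa$-saturated; hence $j^* y k \to j^* y k'$ is an epimorphism, and the induced topology is the atomic topology in which every nonempty sieve covers. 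The amalgamation property imposed on $\ck_\kappa$ is exactly what guarantees that this atomic topology is a genuine Grothendieck topology, so $\mathsf{S}_\kappa(\ca)$ is an atomic $\kappa$-topos.

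For part $(2)$, the joint embedding property makes the terminal map $\tau\colon \ca \to \cdot$ a lax-epi, whence $\mathsf{S}_\kappa(\tau)$ has a fully faithful inverse image by the $\kappa$-form of \cite[Prop. 4.6]{thcat}; thus $\mathsf{S}_\kappa(\ca)$ is connected. An atomic connected topos is boolean and two-valued by \cite[4.2.17]{caramello2018theories}, a statement about topoi that is insensitive to the cardinal parameter. For part $(3)$, under the additional hypotheses on $\eta_\ca$ I would invoke \cite[Prop. 4.13]{thcat} together with the categoricity conclusion of \cite{Caramelloatomic}: the atomicity established in $(1)$ forces $\mathsf{pt}\mathsf{S}_\kappa(\ca)$ to contain an essentially unique saturated object of the relevant presentability rank, and the hypotheses that $\eta_\ca$ is iso-full, faithful and surjective on objects let this uniqueness descend to $\ca$, yielding categoricity in that rank.

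The main obstacle I anticipate is part $(3)$. The finitary proof leans on Caramello's theorem that the category of points of an atomic topos is \emph{countably} categorical, with the distinguished object being the countable saturated model; transporting this to the $\kappa$-graded setting requires a genuinely $\kappa$-indexed categoricity statement, in which the rôle of the countable object is played by a $\kappa$-presentable saturated object. Establishing that this object exists, is unique up to isomorphism, and is detected by $\eta_\ca$ is the one place where the analogy is not purely formal and where I would expect to do real work; by contrast, everything in parts $(1)$ and $(2)$ is a faithful cardinal-shift of the finitary argument.
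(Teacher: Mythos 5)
Your proposal is correct and matches the paper's intent exactly: the paper gives no proof of \Cref{kappasaturated} at all, merely asserting that the definitions and results of the finitary case ``obviously'' generalize to the $\kappa$-case of the Scott adjunction, and the cardinal-shift of the proof of \Cref{thmcategoriesofsaturated objects} that you spell out is precisely that intended argument. Your added caution about part (3) --- that Caramello's countable categoricity input needs a genuinely $\kappa$-indexed analogue --- is a reasonable flag for the one step the paper leaves entirely implicit, but it does not constitute a departure from the paper's approach.
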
 

\begin{thm} 
If $\ce$ is an atomic $\kappa$-topos, then $\mathsf{pt}_\kappa(\ce)$ is a \textit{candidate} category of $\kappa$-saturated objects.
\end{thm}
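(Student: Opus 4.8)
The plan is to mirror, almost verbatim, the proof of the $\omega$-case stated just above, replacing $\omega$ by $\kappa$, $\mathsf{pt}$ by $\mathsf{pt}_\kappa$, and $\mathsf{Ind}$ by $\mathsf{Ind}_\kappa$, and invoking the $\kappa$-analog of Caramello's analysis in \citep{Caramelloatomic}. It is worth stressing at the outset that for a \emph{candidate} category of $\kappa$-saturated objects we only need to exhibit \emph{some} functor $j$ satisfying conditions $(1)$--$(3)$ of the definition; we do not need $j$ to be a topological embedding. This makes the argument considerably lighter than part $(1)$ of Theorem \ref{kappasaturated}, where the full (non-candidate) notion was in play.

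First I would use the ($\kappa$-analog of the) characterization of atomic topoi to choose a presentation $i : \ce \to \Set^{C}$ of $\ce$ by an atomic site $(C, J_{at})$; here $C$ carries the atomic topology, and $C$ has the amalgamation property, since this is precisely what is required to turn the atomic topology into a genuine Grothendieck topology (compare the last line of the proof of Theorem \ref{thmcategoriesofsaturated objects}$(1)$). Applying the $2$-functor $\mathsf{pt}_\kappa$ to the geometric morphism $i$ then yields a functor
\[
j := \mathsf{pt}_\kappa(i) : \mathsf{pt}_\kappa(\ce) \longrightarrow \mathsf{pt}_\kappa(\Set^{C}) \cong \mathsf{Ind}_\kappa(C) =: \ck,
\]
where the identification $\mathsf{pt}_\kappa(\Set^{C}) \cong \mathsf{Ind}_\kappa(C)$ is the $\kappa$-version of the computation used in Example \ref{esempiobase}. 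This settles condition $(1)$ at once, as $\mathsf{Ind}_\kappa(C)$ is $\kappa$-accessible, and condition $(3)$ as well, since $\ck_\kappa$ is (the Cauchy completion of) $C$ and hence inherits its amalgamation property.

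The heart of the matter is condition $(2)$: that $j$ lands inside $\text{Sat}_\kappa(\ck)$, i.e. that every $\kappa$-point of $\ce$, viewed as an object of $\mathsf{Ind}_\kappa(C)$, is $\kappa$-saturated. Here I would use that a point $p \colon \Set \to \ce$, after composition with the inclusion $i$, corresponds to an object $M_p$ of $\mathsf{Ind}_\kappa(C)$ which is moreover \emph{$J_{at}$-continuous}; this continuity condition is exactly what cuts out the points of the subtopos $\ce \hookrightarrow \Set^{C}$. For the atomic topology, $J_{at}$-continuity of $M_p$ unwinds to the statement that for every map $f \colon c \to c'$ between objects of $C \simeq \ck_\kappa$ and every generalized element $c \to M_p$ there is an extension $c' \to M_p$ along $f$ --- which is word for word the injectivity condition defining $\kappa$-saturation. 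Thus $M_p \in \text{Sat}_\kappa(\ck)$. This is the reverse reading of the atomicity/saturation correspondence already exploited in the proof of Theorem \ref{thmcategoriesofsaturated objects}$(1)$(b), and is the content of the $\kappa$-generalization of \citep[Cor. 4.9]{simon} and \citep{Caramelloatomic}.

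The main obstacle is precisely this last step: checking that the $\kappa$-analog of Caramello's argument goes through, i.e. that $J_{at}$-continuity remains equivalent to injectivity against maps of $\ck_\kappa$ in the $\kappa$-ary setting. The only genuinely new bookkeeping is that a generalized element of $M_p$ at a $\kappa$-presentable stage factors through one of the components of the defining $\kappa$-directed colimit, so that the lifting supplied by continuity can be carried out at a single stage; this is routine given $\ck_\kappa \simeq \bar{C}$ and that $\kappa$-directed colimits commute with the $\kappa$-small limits that enter the continuity condition. Once $(2)$ is secured, the functor $j$ witnesses $\mathsf{pt}_\kappa(\ce)$ as a candidate category of $\kappa$-saturated objects, which is the claim.
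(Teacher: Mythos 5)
Your proposal is correct and takes essentially the same approach as the paper: the paper states the $\kappa$-case without proof (as a direct generalization of the $\omega$-case), and its proof of the $\omega$-analogue is exactly your skeleton---present $\ce$ by an atomic site, apply the points functor to the inclusion $i$, and invoke Caramello's results. Your explicit unwinding of atomic-topology continuity into the saturation/injectivity condition, and your observation that a \emph{candidate} category only requires a functor rather than a topological embedding, simply make explicit what the paper delegates to the citation of \cite{Caramelloatomic}.
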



\section*{Acknowledgements}
The content of the first section was substantially inspired by some private conversations with Jiří Rosický. I am grateful to Simon Herny for some very constructive discussions on Sec. 3 and 4. I am indebted to Axel Osmond for having read and commented a preliminary draft of this paper.

\bibliography{thebib}
\bibliographystyle{alpha}

\end{document}